\useunder{\uline}{\ul}{}
\tikzset{
    mid arrow/.style={
        line width=0.75pt, 
        postaction={
            decorate,
            decoration={
                markings,
                mark=at position 0.5 with {\arrow{>}}
            }
        }
    }
}
\newtheorem{obs}{Observation}
\newtheorem{PB}{Open Problem}
\def\mG{\mathcal{G}}
\DeclareMathOperator{\Ness}{Ness}
\newcommand{\bfit}[1]{\textbf{\textit{#1}}}
\newcommand{\toremove}[1]{\textcolor{orange}{}}
\title{Extending Ghouila-Houri's Characterization of Comparability Graphs to Temporal Graphs%
\thanks{This work was supported by the French ANR projects ANR-24-CE48-4377 (GODASse) and ANR-22-CE48-0001 (TEMPOGRAL).}}
\date{}
\author{Pierre Charbit \and
Michel Habib  \and
Amalia Sorondo}
\institute{}
\institute{Université Paris Cité, CNRS, IRIF, F-75013, Paris, France 
\\
\email{\{charbit,habib,sorondo\}@irif.fr}}
\begin{document}
\maketitle
\begin{abstract}
An orientation of a static graph is called \bfit{transitive} if for any three vertices $a,b,c$, the presence of arcs $(a,b)$ and $(b,c)$ forces the presence of arc $(a,c)$. If only the presence of an arc between $a$ and $c$ is required, but its orientation is unconstrained, the orientation is called \bfit{quasi-transitive}. A fundamental result due to Ghouila-Houri \cite{GhouilaHouri} states that any static graph admitting a quasi-transitive orientation also admits a transitive orientation. In a seminal work \cite{mertzios2025complexity}, Mertzios et al. introduced the notion of temporal transitivity in order to model information flows in simple temporal networks. We revisit the model introduced by Mertzios et al. and propose an analogous to Ghouila-Houri's characterization for the temporal scenario. We present a structural theorem that will allow us to express by a 2-SAT formula all the constraints imposed on a temporal graph for it to admit a temporal transitive orientation. The latter produces an efficient recognition algorithm for graphs admitting such orientations, that we will call comparability temporal graphs. Inspired by the lexicographic strategy presented by Hell and Huang in \cite{hell1995lexicographic} to transitively orient static graphs, we then propose an algorithm for constructing a temporal transitive orientation of a YES instance. This algorithm is straightforward and has a running-time complexity of $O(nm + \min\{kn,m^2\})$, with $n$, $m$ and $k$ being respectively the number of vertices, edges and monolabel triangles, i.e., triangles having the same unique time-label on their edges, in the temporal graph. This represents an improvement compared to the algorithm presented in \cite{mertzios2025complexity}. Additionally, we extend the temporal transitivity model to temporal graphs having multiple time-labels associated to their edges and claim that the previous results hold in the multilabel setting. Finally, we propose a characterization of comparability temporal graphs by forbidden temporal ordered patterns.  
\keywords{Temporal graphs  \and Transitive orientations \and Graph algorithms}
\end{abstract}

\section{Introduction}
\label{sect:intro}

\paragraph*{Context.} In the static (i.e., not temporal)  case, an undirected graph is a \bfit{comparability} graph if and only if it admits a \bfit{transitive orientation}, that is, if it is possible to orient its edges such that, whenever an edge $ab$ is oriented from $a$ towards $b$ and an edge $bc$ from $b$ towards $c$ (i.e., arcs $(a,b)$ and $(b,c)$ belong to the orientation), a third edge $ac$ must exist and be oriented from $a$ towards $c$. If only the existence of the third edge $ac$ is required, but its orientation is unconstrained, the orientation is called \bfit{quasi-transitive}. 

Consider a graph $G=(V,E)$ and a quasi-transitive orientation $O$ of $G$.  Whenever $ab \in E$ and $bc \in E$ but $ac \notin E$, orienting $ab$ from $a$ towards $b$ in $O$ will force edge $bc$ to be oriented from $c$ towards $b$ (and analogously, orienting $ab$ from $b$ towards $a$ will force the orientation of $bc$ from $b$ towards $c$). Thus, by defining a Boolean variable $x_{ab}$ for each edge $ab \in E$ and interpreting the truth assignment 1 to the variable as the orientation from $a$ towards $b$, and the truth assignment 0 as the opposite orientation, these forcing relations between arcs, i.e., the presence of an arc in the orientation implying the presence of a second one, can be expressed by clauses of arity two of the type $(x_{ba} \lor x_{cb}) $. Hence, deciding if a graph admits a quasi-transitive orientation can be reduced to solving a $2$-SAT problem, which can be solved by the famous Tarjan's algorithm \cite{aspvall1979linear} in time linear in the size of the set of clauses, which, for our problem,  corresponds to the number of induced paths on three vertices. Using the same approach to determine the existence of a transitive orientation is less efficient, because forbidding a directed triangle requires adding two clauses of size three and therefore obtaining an equivalent 3-SAT problem. In this context, the following result proposed by Ghouila-Houri is particularly interesting as it allows the recognition problem of comparability graphs to be solved by 2-SAT in $O(nm)$, with $n$ and $m$ being respectively the number of vertices and edges of the graph.

\begin{theorem}\cite{GhouilaHouri}\label{theo:GH}
A graph $G$ admits a transitive orientation if and only if it admits a quasi-transitive orientation.
\end{theorem}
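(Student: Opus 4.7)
The forward direction is immediate, since any transitive orientation is, a fortiori, quasi-transitive. For the converse, my plan is to analyse the forcing structure already present in any quasi-transitive orientation and refine it, without changing any individual arc, into a transitive orientation.

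I would first introduce the classical $\Gamma$-relation on directed edges: declare $(a,b)\,\Gamma\,(c,b)$ whenever $ab,bc\in E$ but $ac\notin E$ (equivalently $(b,a)\,\Gamma\,(b,c)$), and take its reflexive, symmetric, and transitive closure to obtain the \emph{implication classes} on directed edges. These classes capture exactly the local 2-SAT-style forcings discussed in the introduction, and the existence of a quasi-transitive orientation is equivalent to each class being \emph{consistent}, meaning that no edge $uv$ has both $(u,v)$ and $(v,u)$ in the same class. Given the hypothesis, I would fix a quasi-transitive orientation $O$; it induces a consistent choice on every implication class, and I would keep exactly this choice in the orientation I am about to build.

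The heart of the proof is then to verify that this orientation is actually transitive. The only possible obstruction is a triangle $abc$ whose three edges end up cyclically oriented. To rule this out I would invoke Gallai's \emph{Triangle Lemma}: if $(a,b)$ and $(b,c)$ lie in the same implication class and $ac\in E$, then $(a,c)$ lies in that class as well. A short case analysis on a triangle then shows that either all three of its edges belong to a single implication class, in which case consistency forces a transitive orientation on the triangle, or they split between at most two classes in a pattern that structurally prevents a directed 3-cycle.

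The main obstacle is therefore the Triangle Lemma itself, which I would prove by induction on the length of a minimum $\Gamma$-chain witnessing the equivalence of $(a,b)$ and $(b,c)$. The inductive step branches on whether various chord edges between consecutive vertices of the chain are present in $G$, and it is precisely there that the quasi-transitive hypothesis is exploited to eliminate the problematic configurations. Once the Triangle Lemma is in place, combining it with the consistency of each implication class of $O$ immediately yields a transitive orientation of $G$, completing the proof.
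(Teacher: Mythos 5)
Your forward direction is fine, and your setup is accurate up to a point: a quasi-transitive orientation is exactly an orientation that selects, for each implication class $A$ of the $\Gamma$-relation, either $A$ or $A^{-1}$, and such a selection exists iff no class meets its own reverse. The fatal step is the next one: you fix the given quasi-transitive orientation $O$, keep its choice on every class, and then claim that this very orientation is transitive. That is false, and it is precisely the difficulty the theorem has to overcome: a quasi-transitive orientation may contain cyclically oriented triangles. The smallest counterexample is $K_3$ itself: all three edges are present, so there are no $\Gamma$-forcings at all, every directed edge is its own implication class, and the cyclic orientation $(a,b),(b,c),(c,a)$ is a perfectly consistent (hence quasi-transitive) selection that is not transitive. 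Your claimed case analysis — that the edges of a triangle either lie in one class or split in a way that ``structurally prevents a directed 3-cycle'' — breaks down exactly here: the three edges of a triangle can lie in three pairwise distinct classes, and then the Triangle Lemma imposes no constraint whatsoever on how they are jointly oriented. So the Triangle Lemma plus consistency of $O$ does not yield transitivity of $O$; the theorem only asserts the existence of \emph{some} transitive orientation, in general different from $O$.

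What is missing is the mechanism that re-orients (or rather, carefully re-chooses) classes so that no directed triangle survives, and that is the actual content of every known proof. Ghouila-Houri breaks directed triangles by locating a module, orienting it transitively, and recursing; Golumbic picks an implication class, orients and removes it, and then \emph{recomputes} the implication classes on the remaining unoriented edges before the next choice — it is this recomputation (equivalently, the specific order in which classes are committed), combined with the Triangle Lemma, that rules out cyclic triangles in the final orientation; Hell and Huang achieve the same by a lexicographically greedy choice of class orientations with respect to a fixed vertex order (this is the strategy the present paper adapts to the temporal setting). Any repair of your argument has to incorporate one of these ingredients; simply inheriting the class choices of the given quasi-transitive orientation cannot work.
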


A first  english version of \cite{GhouilaHouri} can be found in the Appendix section of this article. Note that Theorem \ref{theo:GH} does not imply that every quasi-transitive orientation is transitive, since a quasi-transitive orientation may contain directed triangles, and therefore does not solve the problem of obtaining such an orientation.
How to efficiently obtain a transitive orientation of a given comparability graph was extensively studied. We will present a brief survey of the algorithms proposed to solve the problem.

In his proof, published in the early 1960s, Ghouila-Houri implicitly formulates an algorithm for constructing a transitive orientation of a comparability graph from a quasi-transitive orientation. Given a static graph $G=(V,E)$ and a quasi-transitive orientation of its edges $O$, the strategy is to break the cyclical orientation of triangles in $O$ by identifying a module $M$ containing some, or all, of the triangle vertices, transitively orienting the module, recursively orienting the subgraph induced by $V$-$M$ and orienting all edges between $M$ and $V$-$M$ in the same direction. This approach laid the groundwork for subsequent techniques using modular decomposition, as we will see below.

In \cite{Golumbic2nd}, Golumbic proposed an algorithm for constructing a transitive orientation of a comparability graph. The algorithm is based on the observation, mentioned earlier, that the constraints associated to transitive orientations can be expressed as local forcings between the possible edge orientations. Since these forcings are symmetric, one can partition the possible orientations of the edges of a graph into equivalence classes, called  \bfit{implication classes}. Then, Golumbic's algorithm will construct the orientation by selecting an arbitrary unoriented edge, fixing its orientation, propagating it through its implication class, and then repeating the procedure on the subgraph induced by the not yet oriented edges. Note that the implication classes are re-computed at each iteration as only unoriented edges will be used to identify forcings, this is the key to avoid orienting triangles cyclically. The algorithm proposed by Golumbic has a running-time of $O(nm)$. Later on, Spinrad \cite{spinrad1985comparability} proposed an algorithm with an $O(n^2)$ running-time, relying on a modular decomposition technique based on the same principles proposed by Ghouila-Houri in his proof. Along the same lines, in the late 1990s, McConnell and Spinrad \cite{mcconnell1999modular} proposed linear time algorithms based on efficiently computing the modular decomposition of a given graph.

In his seminal article \cite{GhouilaHouri}, Ghouila-Houri proposed another important characterization of comparability graphs. Let $G=(V,E)$ be a static graph and $G^+$ the graph obtained by taking as vertices both possible orientations $(a,b)$ and $(b,a)$ for each edge $ab \in E$, and by connecting all vertices $(a,b)$ to $(b,a)$ and to all $(b,c)$ such that $ac \notin E$. One can think of $G^+$ as the \enquote{conflict} graph of $G$ as it links incompatible edge orientations. Then, it is easy to see that $G$ admits a transitive orientation if and only if $G^+$ admits a 2-coloring of its vertices. Once again, this characterization only provides a straightforward recognition algorithm,  leaving unsolved the problem of obtaining a transitive orientation of $G$ in the case of a positive instance. In the 1990s, Hell and Huang \cite{hell1995lexicographic} used the previous characterization and provided a lexicographic argument to greedily 2-color the components of $G^+$ in order to obtain a transitive orientation of a comparability graph. The technique, as well as the correctness proof, are very straightforward and result in an algorithm having $O(nm)$ time complexity.

Although obtaining a transitive orientation can be done in linear time if the graph was a comparability graph, up to our knowledge there is no linear time algorithm to verify if a given orientation is transitive, or even quasi-transitive, as this problem is closely related to the Boolean Matrix Multiplication Problem. Therefore, the recognition of comparability graphs still requires $O(nm)$ running-time in the worst case.

\paragraph*{Temporal Comparability Graphs.}
Temporal (or time-varying, or evolving) graphs were introduced with the aim of modeling dynamic networks, that is, networks whose connections evolve over time. They are of great use in modeling a large number of real-world systems, such as information, traffic and social networks, see \cite{casteigts2012time, latapy2018stream}. Our work uses the formalism presented in the foundational paper of Kempe et al.\cite{kempe2000connectivity}, where a \bfit{temporal network} is defined as a pair $(G,\lambda)$, with $\lambda$ a time-labelling function over the edges of a static graph $G$, specifying the discrete time at which its endpoints interact. With the objective of modeling the flow of information on paths whose time-labels respect the ordering of time, Kempe et al. introduced the notion of \bfit{(strict) time-respecting path}, where edges $e_1, e_2, ..., e_k$ of a given path in $G$ satisfy $\lambda(e_1) \leq \lambda(e_2) \leq ... \leq \lambda(e_k)$ (resp. $\lambda(e_1) < \lambda(e_2) < ... < \lambda(e_k)$). As in these paths the edges model undirected interactions between the network nodes, Mertzios et al.\cite{mertzios2025complexity} explored directed communications, extending the classical notion of transitive orientations to the temporal scenario. As different versions of temporal transitivity were previously considered in diverse areas such as medical data treatment \cite{moskovitch2015fast} or text processing \cite{tannier2011evaluating}, Mertzios et al. propose a new definition which can be justified in the context of confirmation and verification of information in a temporal network. One can consider a scenario where an important information is sent from node $a$ to node $b$ at time $t_1$ and then this intermediary node sends the information to node $c$ at $t_2$, with $t_1 \leq t_2$. Subsequently, node $c$ might want to verify the information by querying directly from node $a$ at a time $t_3 \geq t_2$. Before formally defining temporal transitive orientations, let us make a parallelism with the static case by introducing quasi-temporal transitive orientations. Whenever a directed time-respecting path $abc$ is formed, temporal transitive orientations will ask for a third edge $ac$ to exist, to have a greater or equal time-label than the one associated to edge $bc$ and to be oriented from $a$ towards $c$. As for quasi-transitive orientations in the static scenario, the orientation of this third arc will not be restricted by quasi-temporal transitive orientations, unlike its existence and time-label value. 

\begin{definition}\label{def:TTO}
An  orientation $O$ of a temporal graph $\mathcal{G} = (G,\lambda)$ is a temporal transitive orientation, or \bfit{TTO}, (resp. \bfit{QTTO}) if whenever $(a, b)\in O $ and $(b, c) \in O$, with $\lambda(ab) \leq \lambda(bc)$ then $(a, c) \in O$ (resp. $(a, c)$ or $(c, a) \in O)$ with $\lambda(ac) \geq \lambda(bc)$.
\end{definition}

Note that the time-label value of an edge is independent of its orientation. Then, by the previous definition, we observe that only monolabel triangles, i.e., triangles which have a unique time-label associated to their three edges, can be cyclically directed in a QTTO. This will be the equivalent to directed triangles in the static case, as it corresponds to the only restriction imposed by temporal transitive orientations that cannot be captured by Boolean clauses of arity two. We will refer to temporal graphs admitting a temporal transitive orientation as \bfit{temporal comparability graphs}. One can note that this definition is a true generalization of comparability graphs in the static case, since if all edges share the same time-label, then $\mathcal{G} = (G,\lambda)$ is a comparability temporal graph if and only if $G$ is a comparability graph. Given a temporal graph $\mathcal{G}$, the TTO Problem consists of deciding whether $\mathcal{G}$ admits a temporal transitive orientation of its edges. Mertzios et al. presented an algorithm to solve this problem, inspired by the strategy proposed by Golumbic \cite{Golumbic2nd} for the static case. This way, they extend the forcing relation to consider the temporal labels of the edges and express the temporal transitive constraints by a Boolean formula formed by the conjunction of a 3-NAE and a 2-SAT formula. Recall that 3-NAE stands for 3-Not-All-Equal, and is formed by a conjunction of clauses, with three literals each, satisfied when at least one of the literals receives 1 as truth assignment and another one 0. The goal of these clauses is to capture the constraint of orienting monolabel triangles in a non-cyclical way. As the problem of 3-NAE is NP-complete, the authors use structural arguments specific to the TTO Problem to prove that the overall running-time of their algorithm is polynomial (see \cite{mertzios2025complexity} for more details), although they do not explicitly state its complexity and the correctness proof is rather technically involved. Different variants to the problem were defined by Mertzios et al. Given an orientation $O$ of a temporal graph $\mathcal{G}$, if we only ask for $(a,c) \in O$ with $\lambda(ac) \geq \lambda(bc)$ whenever $abc$ is a strict time-respecting directed path, i.e.,  $\lambda(ab) < \lambda(bc)$, then the problem is defined as \textsc{Strict TTO}. Similarly, if one asks the transitive arc from $a$ to $c$ to have a time-label such that $\lambda(ac) > \lambda(bc)$, the orientation is called strongly temporal transitive, and the associated problem \textsc{Strong TTO}. By combining the two requirements, one gets the \textsc{Strong Strict TTO} problem. Mertzios et al. proved in \cite{mertzios2025complexity} that the \textsc{TTO}, \textsc{Strong TTO} and \textsc{Strong Strict TTO} problems can be solved in polynomial time while deciding if a temporal graph admits a Strict TTO is \textsc{NP}-hard.
\vspace{-1ex}
\paragraph*{Forbidden Patterns Characterizations.}
Hereditary classes of graphs, such as comparability graphs, can be characterized by their set of minimal obstructions for the induced subgraph relation as can be seen in \cite{Diestel12}. Very often, these sets are unknown, or known but infinite. The use of additional structures, such as an ordering of the graph's vertices, can allow to describe these properties by finite sets of forbidden structures. For example, it is not difficult to see that comparability graphs are exactly the graphs for which there exists an ordering $\prec$ of their vertex set such that there is no ordered triple $x\prec y \prec z$ where $xy$ and $yz$ are edges of the graph but $xz$ is not. The study of characterizations by such forbidden ordered patterns was proposed by Damaschke \cite{damaschke1990forbidden}. Since then, multiple studies have been carried out, allowing the description of many hereditary graph classes. In \cite{hell2014ordering}, Hell et al. show that all graph classes characterized by sets of three-vertex ordered patterns can be recognized in polynomial time. In \cite{FeuilloleyH21}, the authors refine the previous analysis presenting a detailed characterization of all 24 graph classes that can be described by a set of three-vertex ordered patterns, showing that all classes except two of them can be recognized in linear time. In \cite{csikos2025forbidden}, they introduce the usage of forbidden ordered patterns for temporal graphs. In the temporal case, a pattern is a temporal subgraph that represents a specific arrangement with respect to the order not only of its vertices but also of its edges based on their time-labels.

\paragraph*{Our contributions and structure of the paper.} In Section \ref{sect:preliminaries}, we present the main notations and definitions that will be used throughout the paper. Then, we begin Section \ref{sect:tto-algorithm} by observing that Ghouila-Houri's result, stated in Theorem \ref{theo:GH}, is not directly adaptable to the temporal case, since it is not true that every temporal graph admitting a QTTO admits a TTO, as shown by the example in Figure \ref{fig:qtto-not-tto}. Based on this observation, we define almost-temporal transitive orientations, or ATTO, which are slightly more restrictive than the QTTO ones. 
Then, we establish the exact analogue of Ghouila-Houri's theorem by proving that a temporal graph $\mathcal{G}$ admits an ATTO if and only if it admits a TTO. Inspired by the lexicographic strategy presented by Hell and Huang in \cite{hell1995lexicographic}, we present a lexicographic simple approach to solve the TTO problem with a $O(nm + \min\{kn,m^2\})$ running-time, with $k$ being the number of monolabel triangles in the temporal graph. As most temporal networks allow resources to traverse their arcs at multiple given times and therefore result in multilabel temporal graphs, we finish this section by extending the temporal transitivity notion to the multilabel setting and stating that the proposed results hold as well. In Section \ref{sect:patterns}, we introduce the notations we will use to work with temporal forbidden ordered patterns and we provide a characterization for the class of temporal graphs admitting a temporal transitive orientation by means of these structures, both for TTO and Strict TTO. We conclude in Section \ref{sect:conclusion} by presenting a series of open problems, suggesting interesting directions for future research.

\section{Preliminaries}

\subsection{Definitions and notations}
\label{sect:preliminaries}

A \bfit{graph} is a pair $G=(V,E)$ where $V$ is the vertex set and $E \subseteq V^2$ is the set of edges. Usually,  for algorithmic complexity analysis, we take $n=|V|$ and $m=|E|$. If we consider $G=(V,A)$ with $A$ a set of ordered pairs of vertices, called arcs, we say $G$ is a \bfit{directed graph} or a \bfit{digraph}. Note that a directed graph might contain 2-cycles, i.e., bidirected edges. If $u$ and $v$ are vertices of a graph (resp. digraph), we denote by $uv$ the edge between $u$ and $v$ (resp. $(u,v)$ the arc from $u$ to $v$). In the context of directed graphs, we will abuse the notation to denote as a \bfit{directed cycle of size $k$} the set of vertices $\{v_1, \dots, v_k\}$ such that $v_iv_{i+1}$,  for all $1\leq i < k$, and $v_kv_1$ are arcs.
We will refer to the directed cycle of size three as a \bfit{directed triangle}.

A \bfit{temporal graph} is a graph whose vertex set is fixed while its edge set changes over time. Given a classical graph $G=(V,E)$, we can obtain a temporal graph $\mathcal{G}$ by assigning a set of time-labels to its edges such that $\mathcal{G}=(G,\lambda)$ with $\lambda:E\rightarrow 2^{\mathds{N}}$. These time-labels indicate the discrete time steps in which a given edge is active. We say $G$ is the \bfit{underlying graph} of $\mathcal{G}=(G,\lambda)$. A \bfit{temporal subgraph} $(G',\lambda')$ of a temporal graph $(G,\lambda)$ is a temporal graph such that $G'=(V',E')$ is a subgraph of $G$ and $\lambda'(e) \subseteq \lambda(e)$ for every $e \in E'$. A temporal graph is \bfit{simple} if $\lambda: E\rightarrow \mathds{N}$, that is, every edge has a single presence time. As we will mostly reference simple temporal graphs, we will refer to them as temporal graphs and when considering temporal graphs admitting multiple time-labels per edge, we will explicitly call them \bfit{multilabel} temporal graphs. We say that a temporal graph is \bfit{monolabel} if all of its edges share the same single time-label. In the temporal scenario, a classic graph might be referred to as a \bfit{static graph}. We say two temporal graphs $\mathcal{G}=(G,\lambda_{G})$ and $\mathcal{H}=(H,\lambda_{H})$ are \bfit{isomorphic} if there exists a bijection $\phi$ between their vertex sets such that any two vertices $u,v$ are adjacent in $G$ if and only if $\phi(u), \phi(v)$ are adjacent in $H$ and if $uv $ is an edge then  $\lambda_G(uv) = \lambda_H(\phi(u)\phi(v))$.

Following \cite{GhouilaHouri, hell1995lexicographic}, an \bfit{orientation} $O$ of an undirected graph $G=(V,E)$ consists in the choice of exactly one of the pair $(a, b)$, $(b, a)$ for each edge $ab \in E$. We refer to a graph equipped with an orientation as an \bfit{oriented graph}. Let us denote by $AP(G)$ the set of all possible arcs in an orientation of $G$, that is $\{(u,v) \hspace{0.15ex} | \hspace{0.15ex}  uv\in E \text{ or } vu \in E\}$. For a subset $O$ of $AP(G)$ we denote by its \bfit{reverse} the set defined by $O^{R} = \{(v,u) \hspace{0.25ex} | \hspace{0.15ex} (u,v)\in O\}$. A \bfit{partial orientation} of $G$ is a subset $O\subset AP(G)$ satisfying $O \cap O^{R} = \emptyset$. An \bfit{orientation} is thus a partial orientation $O$ satisfying  $O \cup O^{R} = AP(G)$.

\subsection{Preliminary observations on  temporal comparability graphs}\label{subsect:preliminar-properties}%

Let $\mathcal{G}=(G,\lambda)$ be a temporal graph and $O$ a TTO of $\mathcal{G}$. Observe that, unlike for the static case, $O^{R}$ is not necessarily a TTO of $\mathcal{G}$. We can take the odd cycle in Figure \ref{fig:t-comparability-examples} $(ii)$ as an example. Of course, not all temporal graphs are comparability temporal graphs, as we can take any non-comparability static graph and assign a unique time-label to its edges. What is more, for a non comparability temporal graph $\mathcal{G}=(G,\lambda)$, the underlying graph $G$ might be a comparability graph, as shown in Figure \ref{fig:t-comparability-examples} $(i)$. The opposite situation being also possible, i.e. a non-comparability static graph as underlying graph of a comparability temporal graph, as we see in Figure \ref{fig:t-comparability-examples} $(ii)$.

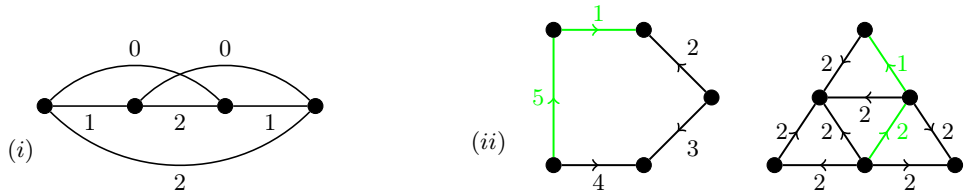
\begin{figure}[H]
    \begin{center}
        \begin{tikzpicture}[scale=0.6]
        \node (a) at (0, 0) [circle, draw, inner sep=2pt, fill=black] {};
        \node (b) at (2, 0) [circle, draw, inner sep=2pt, fill=black] {};
        \node (c) at (4, 0) [circle, draw, inner sep=2pt, fill=black] {};
        \node (d) at (6, 0) [circle, draw, inner sep=2pt, fill=black] {};
        \draw[-,semithick] (a) -- (b) node[midway, below] {1};
        \draw[-,semithick] (c) -- (b) node[midway, below] {2};
        \draw[-,semithick] (c) -- (d) node[midway, below] {1};
        \draw[-,semithick, bend right=45] (c) to node[midway, above] {0} (a);
        \draw[-,semithick, bend left=45] (b) to node[midway, above] {0} (d);
        \draw[-,semithick, bend left=45] (d) to node[midway, below] {2} (a);
        \node[text width=1cm] (I) at (0,-1) {$(i)$};
        \end{tikzpicture} \hspace{12ex}
        \begin{tikzpicture}[scale=0.6]
        \node[draw, circle, inner sep=2pt, fill=black] (A) at (1,1.5) {};
        \node[draw, circle, inner sep=2pt, fill=black] (B) at (1,-1.5) {};
        \node[draw, circle, inner sep=2pt, fill=black] (C) at (3,1.5) {};
        \node[draw, circle, inner sep=2pt, fill=black] (D) at (3,-1.5) {};
        \node[draw, circle, inner sep=2pt, fill=black] (E) at (4.5,0) {};
        \draw[mid arrow, green] (A) -- (C) node[midway, above] {1};
        \draw[mid arrow] (E) -- (C) node[midway, above right] {2};
        \draw[mid arrow] (B) -- (D) node[midway, below] {4};
        \draw[mid arrow, green] (B) -- (A) node[midway, left] {5};
        \draw[mid arrow] (E) -- (D) node[midway, below right] {3};
        \node[text width=1cm] (I) at (0,-1) {$(ii)$};
        \end{tikzpicture} \hspace{3ex}
        \begin{tikzpicture}[scale=0.6]
        \node[draw, circle, inner sep=2pt, fill=black] (A) at (0,0) {};
        \node[draw, circle, inner sep=2pt, fill=black] (B) at (1,1.5) {};
        \node[draw, circle, inner sep=2pt, fill=black] (C) at (2,0) {};
        \node[draw, circle, inner sep=2pt, fill=black] (E) at (-1,-1.5) {};
        \node[draw, circle, inner sep=2pt, fill=black] (F) at (3,-1.5) {};
        \node[draw, circle, inner sep=2pt, fill=black] (D) at (1,-1.5) {};
        \draw[mid arrow] (B) -- (A) node[midway, left] {2};
        \draw[mid arrow, green] (C) -- (B) node[midway, right] {1};
        \draw[mid arrow] (C) -- (A) node[midway, below] {2};
        \draw[mid arrow] (E) -- (A) node[midway, left] {2};
        \draw[mid arrow] (C) -- (F) node[midway, right] {2};
        \draw[mid arrow] (D) -- (F) node[midway, below] {2};
        \draw[mid arrow] (D) -- (E) node[midway, below] {2};
        \draw[mid arrow] (D) -- (A) node[midway, left] {2};
        \draw[mid arrow, green] (D) -- (C) node[midway, right] {2};
        \end{tikzpicture}
    \end{center}
    \caption{$(i)$ Comparability graph $K_4$ as underlying static graph of a non-comparability temporal graph. $(ii)$ Non-comparability graphs as underlying static graphs of comparability temporal graphs.}
    \label{fig:t-comparability-examples}
\end{figure}

However, one can prove that every static graph $G$ admits a transitive temporalization, that is, the assignation of time-labels to its edges resulting in a comparability temporal graph. It is sufficient to consider an arbitrary ordering $\sigma$ of the vertices of $G$ and defining $\lambda$ as:

$$
\lambda(v_{i}v_{j})=
\begin{cases}
n-i \hspace{1.5ex} \text{if} \hspace{1.5ex} v_{i} \prec_{\sigma} v_{j}\\
n-j \hspace{1.5ex} \text{otherwise}
\end{cases}
\hspace{1ex} \text{for every} \hspace{1ex} v_{i}v_{j} \in E \hspace{1ex} \text{with} \hspace{1ex} i,j \in \{1, ..., n \}, i \neq j
$$

If we define $O$ by orienting every edge from left to right according to $\sigma$, all directed paths in $O$ will have strictly decreasing time-labels in $\mathcal{G}$. Therefore, all arcs will be unconstrained and the temporal transitivity conditions will be trivially satisfied. Moreover, we can verify that $O$ is not only a TTO of $\mathcal{G}$ but a Strict TTO, Strong TTO and Strong Strict TTO as well. We also note the existence of static graphs for which any temporal labelling will produce a comparability temporal graph, such is the case for comparability triangle-free graphs, such as trees and even cycles.

\section{Recognition of comparability temporal graphs and TTO construction}
\label{sect:tto-algorithm}


\subsection{Structural characterization of temporal transitivity}

As explained in the introduction, in the static setting, the key to deciding whether a transitive orientation exists, or to construct one, is the weaker notion of quasi-transitive orientations. Although not every quasi-transitive orientation is transitive, Theorem~\ref{theo:GH} shows that the existence of a quasi-transitive orientation implies the existence of a transitive one. All known algorithms rely, either implicitly or explicitly, on this fact.

In the temporal case, Definition \ref{def:TTO}, introduced by Mertzios et al., provides a generalization of both transitive and quasi-transitive orientations (TTO and QTTO). As in the static case, any TTO  is also a QTTO, and again the converse is not true since a cyclic orientation of a monolabel triangle is a QTTO but not a TTO. The crucial difference is that Theorem \ref{theo:GH} does not extend with these definitions of TTO and QTTO, as witnessed by the example shown in Figure \ref{fig:qtto-not-tto}: the temporal graph is oriented by a QTTO but one can prove it does not admit any TTO (a short proof can be derived using Lemma \ref{lem:necessary} below).

\begin{figure}[H]
\begin{center}
\begin{tikzpicture}[scale=0.6]

\node[draw, circle, inner sep=1pt] (A) at (0,0) {$a$};
\node[draw, circle, inner sep=1pt] (B) at (1,1.5) {$b$};
\node[draw, circle, inner sep=1pt] (C) at (2,0) {$c$};
\node[draw, circle, inner sep=1pt] (D) at (3,1.5) {$d$};
\node[draw, circle, inner sep=1pt, font=\scriptsize] (F) at (-1,1.5) {$f$};
\node[draw, circle, inner sep=1pt] (E) at (1,-1.5) {$e$};

\draw[mid arrow,thick] (A) -- (B) node[midway, left] {2};
\draw[mid arrow,thick] (B) -- (C) node[midway, right] {2};
\draw[mid arrow,thick] (C) -- (A) node[midway, below] {2};
\draw[mid arrow] (F) -- (B) node[midway, above] {$3$};
\draw[mid arrow] (F) -- (A) node[midway, left] {1};
\draw[mid arrow] (D) -- (B) node[midway, above] {1};
\draw[mid arrow] (D) -- (C) node[midway, right] {3};
\draw[mid arrow] (E) -- (C) node[midway, right] {1};
\draw[mid arrow] (E) -- (A) node[midway, left] {3};

\end{tikzpicture}
\end{center}
\caption{A non-comparability temporal graph admitting a QTTO.}
\label{fig:qtto-not-tto}
\end{figure}
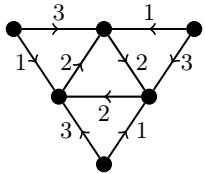

The following lemma states some simple facts about QTTO.

 \begin{lemma}\label{lem:qtto-monolabel-triangle}
 Let $O$ be a QTTO of a temporal graph $\mathcal{G}=(G,\lambda)$. Then,
\begin{itemize}
\item A directed triangle in $O$ is necessarily monolabel. 
\item $O$ is $TTO$ if only if $O$ is acyclic.
\end{itemize}
 \end{lemma}
 \begin{proof}

Suppose there exists a directed non-monolabel triangle $abc$ in $O$. Then, necessarily there exists an edge $ab$ with maximum time-label value in the triangle which is adjacent to another edge, say $bc$, such that $\lambda(ab) > \lambda(bc)$ and $\lambda(ca) \leq \lambda(ab)$. But this contradicts $O$ being QTTO. Now, for the second item, if $O$ is QTTO and acyclic then, in particular, it does not contain any directed triangle and thus is TTO. Suppose $O$ is QTTO but not acyclic and let $C$ be a minimal directed cycle. There must exist along the cycle two consecutive arcs $(a,b)$ and $(b,c)$ such that $\lambda(ab)\leq \lambda(bc)$. Since $O$ is QTTO, either $(a,c)$ or  $(c,a)$ belong to $O$. The first case would produce a shorter directed cycle, contradicting the minimality of $C$, so we can assume  $(c,a)\in O$. But then we get a directed triangle and thus $O$ is not TTO.
\end{proof}

We now define the implication graph of a temporal graph $\mathcal{G}$, that will encode the forcing rules between arcs in a QTTO described in the introduction.

\begin{definition}
The implication digraph of a temporal graph $\mathcal{G}=(G,\lambda)$, denoted \textbf{Imp($\mathcal{G}$)}, is the digraph with vertex set $AP(G)$ and with arcs from $(a,b)$ to $(c,b)$ and from $(b,c)$ to $(b,a)$ if  $\lambda(ab)\leq \lambda(bc)$ and $(ac \notin E$ or $\lambda(ac)<\lambda(bc))$.
\end{definition}

Observe that $O$ is a QTTO if and only if  for each arc from $(a,b)$ to $(c,b)$ in $Imp(\mathcal{G})$,  $(a,b)\in O$ implies $(c,b)\in O$. As we have seen, this implication digraph is not enough to capture the TTO forcing relations because of monolabel triangles. This is why we will augment this digraph using a four-vertex configuration, called \textit{correlated monolabel triangle}.

\begin{definition}
A quadruple of vertices $(a,b,c,d)$ of a temporal graph $\mathcal{G}=(G,\lambda)$ forms a \textbf{correlated monolabel triangle} if: 
\begin{itemize}
    \item $abc$ form a monolabel triangle of time-label $t$
    \item $bd \in E$ with $\lambda(bd)\leq t$
    \item $cd \in E$ with $\lambda(cd) > t$
    \item if $ad \in E$, then $\lambda(ad) < t$
\end{itemize}
\end{definition}
\begin{figure}[H]
\begin{center}

\begin{tikzpicture}[scale=0.75]
\node[draw, circle, inner sep=1pt] (D) at (0,0) {$d$};
\node[draw, circle, inner sep=1pt] (B) at (2,0) {$b$};
\node[draw, circle, inner sep=1pt] (A) at (1,-1.5) {$a$};
\node[draw, circle, inner sep=1pt] (C) at (3,-1.5) {$c$};
\draw[-,line width=0.75pt] (D) -- (B) node[midway, above] {$\leq t$};
\draw[-, dashed] (D) -- (A) node[pos=0.6, left] {$<t\,$};
\draw[-, line width=0.75pt] (C) -- (A) node[midway, below] {$t$};
\draw[-, line width=0.75pt] (C) -- (B) node[midway, right] {$t$};
\draw[-, line width=0.75pt] (A) -- (B) node[midway, left] {$t$};
\draw[line width=0.75pt] (D) .. controls (-1.8,-2) and (2.1,-3.7) .. (C) node[midway, below left] {$>t$};
\end{tikzpicture}
\vspace{-2em}
\end{center}
\caption{Correlated monolabel triangle $(a,b,c,d)$.}
\label{fig:bad-quadruple}
\end{figure}

{\noindent\bf Remark:} When depicting a temporal graph, dashed labelled edges represent either a non edge or an edge satisfying the label condition.\\

For any correlated monolabel triangle $(a,b,c,d)$, it is not difficult to see that there is a directed path $(b,c)(d,c)(d,b)(a,b)$ in $Imp(\mathcal{G})$. Therefore, in any TTO, the presence of $(b,c)$  forces the presence of $(a,b)$. Since a TTO cannot contain the directed triangle $abc$, it follows that if $(b,c)$ is an arc of a TTO, then so will be $(a,c)$. Based on this observation, we propose the following definition.

\begin{definition}
The augmented implication digraph of a temporal graph $\mathcal{G}$, denoted \textbf{Aug($\mathcal{G}$)}, is the digraph obtained from $Imp(\mathcal{G})$ by adding all arcs $((b,c),(a,c))$ and $((c,a),(c,b))$, where $(a,b,c,d)$ forms a correlated monolabel triangle. We say that an arc $(x,y)$ \bfit{forces} an arc $(u,v)$ if there is a directed path from $(x,y)$ to $(u,v)$ in $Aug(\mathcal{G})$.
\end{definition}

The following lemma is a direct consequence of the definition of the arcs of $Imp(\mG)$ and $Aug(\mG)$. It reflects the law of contraposition if we think of the vertices of $Aug(\mG)$ as Boolean variables, as in the introduction.

\begin{lemma}\label{lem:contrapos}
Let  $uv$ and $xy$ be edges of a temporal graph $\mG$. If $(u,v)$ forces $(x,y)$, then $(y,x)$ forces $(v,u)$.
\end{lemma}

\begin{definition}
If $O$ is a partial orientation of a temporal graph $\mathcal{G}$, we denote by $O^+$ (resp. $O^-$) the set of vertices $(u,v)$ of $Aug(\mathcal{G})$ that are forced by (resp. that force) some arc in $O$. An orientation $O$ of $\mG$ is an Almost-TTO (or \textbf{ATTO})  if $O^+=O$.
\end{definition}

The additional restrictions captured by $Aug(\mathcal{G})$ do not follow directly from the definition of temporal transitive orientations. However, as we have seen before, if $(x,y)$ forces $(u,v)$, then any TTO of $\mathcal{G}$ that contains $(x,y)$ must also contain $(u,v)$. In other words, \bfit{every TTO is an ATTO}. Again the converse is not true (consider a monolabel directed triangle), but the main theorem of this section (Theorem \ref{theo:gouila-houri-generalization}) will provide the desired analogue of Ghouila-Houri's theorem: a temporal graph admits a $TTO$ if and only if it admits an $ATTO$.

The definition of the augmented implication digraph $Aug(\mathcal{G})$ immediately implies the following useful lemma.

\begin{lemma}
Let $\mathcal{G}$ be a temporal graph and let $O$ be an ATTO of $\mathcal{G}$. If $(a,b,c,d)$ is a correlated monolabel triangle, then triangle $abc$ cannot be directed in $O$.
\label{lem:non-dir-corr-triangle}
\end{lemma}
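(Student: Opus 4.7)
The plan is to argue by contradiction. Assume that the triangle $abc$ is directed in $O$. Up to reversing the cyclic direction, there are exactly two possible cyclic orientations of $abc$, giving two cases to handle, and in each case I want to exhibit a forcing relation in $\Aug(\mathcal{G})$ that, combined with $O^+=O$, forces both an arc and its reverse into $O$, contradicting $O\cap O^{R}=\emptyset$.

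In the first case, suppose $(a,b),(b,c),(c,a)\in O$. Here I would apply the $\Aug$-specific arc directly: since $(a,b,c,d)$ is a correlated monolabel triangle, by the very definition of $\Aug(\mathcal{G})$ the arc $((b,c),(a,c))$ belongs to $\Aug(\mathcal{G})$. Because $(b,c)\in O$ and $O^+=O$, this forces $(a,c)\in O$, contradicting $(c,a)\in O$.

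In the second case, suppose $(b,a),(c,b),(a,c)\in O$. Neither of the two $\Aug$-specific arcs $((b,c),(a,c))$ and $((c,a),(c,b))$ has its tail in $O$, so I cannot simply invoke them. Instead I would recycle the auxiliary vertex $d$ provided by the correlated triangle. The paper has already observed that $(b,c)\to(d,c)\to(d,b)\to(a,b)$ is a directed path in $\Imp(\mathcal{G})$. The key point is that the verification of the first arc of that path uses only $\lambda(bc)=t<\lambda(cd)$ and the condition that $ad\notin E$ or $\lambda(ad)<\lambda(cd)$; since $\lambda(ac)=t$ as well, exactly the same check shows that $((a,c),(d,c))\in\Imp(\mathcal{G})$, so the path $(a,c)\to(d,c)\to(d,b)\to(a,b)$ also lies in $\Imp(\mathcal{G})\subseteq\Aug(\mathcal{G})$. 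Then $(a,c)\in O$ and $O^+=O$ force $(a,b)\in O$, contradicting $(b,a)\in O$.

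The main obstacle is this asymmetry between the two cases: only one of the two cyclic orientations of $abc$ can be ruled out by the $\Aug$-specific arc added at the definition, and the other orientation forces me to reopen the verification that gave the $\Imp$-path through $d$ and observe that it adapts verbatim from $(b,c)$ to $(a,c)$, thanks to the correlated-triangle constraints $\lambda(bd)\le t$ and $\lambda(ad)<t$ (when $ad\in E$). No delicate computation is needed beyond this re-verification.
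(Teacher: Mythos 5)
Your proof is correct, and it is in fact more detailed than what the paper records: the paper simply asserts that the lemma follows \enquote{immediately} from the definition of $\Aug(\mathcal{G})$, which really only disposes of your first case. Your split into the two cyclic orientations is exactly the right point to stress. The two added arcs $((b,c),(a,c))$ and $((c,a),(c,b))$ are contrapositives of one another, so they only exclude the orientation $(a,b),(b,c),(c,a)\in O$; the reversed orientation $(b,a),(c,b),(a,c)\in O$ genuinely requires the detour through $d$, and your way of handling it is the natural one given the machinery already set up (the directed path $(b,c)(d,c)(d,b)(a,b)$ in $\Imp(\mathcal{G})$ observed before the definition of $\Aug(\mathcal{G})$). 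Your re-verification is correct: $((a,c),(d,c))\in\Imp(\mathcal{G})$ because $\lambda(ac)=t<\lambda(cd)$ and $ad\notin E$ or $\lambda(ad)<t<\lambda(cd)$, and the arcs $((d,c),(d,b))$ and $((d,b),(a,b))$ are unchanged, so $(a,c)$ forces $(a,b)$; with $O^{+}=O$ and $O\cap O^{R}=\emptyset$ this contradicts $(b,a)\in O$. One small inaccuracy, harmless to the argument: the first arc of the original path, $((b,c),(d,c))$, is justified by the condition on $bd$ (namely $\lambda(bd)\le t<\lambda(cd)$), not by the condition on $ad$; the $ad$ condition is what your substituted first arc $((a,c),(d,c))$ and the final arc $((d,b),(a,b))$ rely on.
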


Since $Imp(\mG)$ is a subgraph of $Aug(\mG)$ it is clear that every ATTO is a QTTO.  The converse is not true, as one can take as an example the QTTO represented in Figure \ref{fig:qtto-not-tto}, which is not ATTO as $abc$ is a directed triangle belonging to a correlated monolabel triangle.

The following definition will play a crucial role in establishing the equivalence between admitting a TTO and an ATTO.

\begin{definition}
We will say an arc $(x,y)$ is \textbf{necessary} if $(y,x)$ forces $(x,y)$. We will denote by \textbf{Ness($\mathcal{G}$)} the set of necessary arcs of $\mathcal{G}$. 
\end{definition}

\begin{lemma}\label{lem:necessary}
Let $\mG$ be a temporal graph. Then $Ness(\mG)^+=Ness(\mG)$ and $Ness(\mG)\subset O$ for any $O$ ATTO of $\mG$. 
\end{lemma}
\begin{proof}
Consider $(x,y) \in Ness(\mathcal{G})$ and assume $(x,y)$ forces $(u,v)$. By Lemma \ref{lem:contrapos} $(v,u)$ forces $(y,x)$, and by definition of $\Ness(\mG)$ $(y,x)$ forces $(x,y)$. Hence $(v,u)$ forces $(u,v)$ and thus $Ness(\mathcal{G})^+ = Ness(\mathcal{G})$. Now if $O$ is an ATTO of $\mathcal{G}$, since an arc and its reverse cannot be both in $O$, all arcs in $Ness(\mathcal{G})$ must be contained in $O$.
\end{proof}

By Lemma \ref{lem:qtto-monolabel-triangle}, any TTO is acyclic, so if a temporal graph $\mG$ admits a TTO, then the arcs in $Ness(\mathcal{G})$ must induce an acyclic orientation in $\mG$. In fact, the key ingredient to prove that $\mG$ admits a TTO if and only if it admits an ATTO will be the observation that both conditions are equivalent to $Ness(\mG)$ containing no directed cycle. Note that the fact that $Ness(\mG)$ is acyclic for any comparability temporal graph directly  implies that the graph represented in Figure \ref{fig:qtto-not-tto} does not admit any TTO, as it is not difficult to see that the three arcs $(a,b), (b,c),(c,a)$ are all in $Ness(\mathcal{G})$.

Let us now propose two lemmas, reminiscent of a statement called the \bfit{Triangle Lemma} in \cite{Golumbic2nd}, that will be central in the proof of our main theorem.

\begin{lemma}\label{lem:triangle}Let $O$ be an ATTO of a temporal graph $\mathcal{G}=(G,\lambda)$. If $uvw$ is a directed triangle in $O$ and there is an arc $(a,b) \in O$ such that $((a,b),(u,v)) \in Aug(\mathcal{G})$, then $abw$ is also a directed triangle in $O$.
\end{lemma}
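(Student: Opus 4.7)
The plan is a case analysis on the origin of the arc $((a,b),(u,v)) \in \Aug(\mathcal{G})$. This arc is either (Case A) an $\Imp$ arc ending at a shared vertex, giving $b = v$; (Case B) an $\Imp$ arc starting at a shared vertex, giving $a = u$; or (Cases C.1 and C.2) one of the two $\Aug$ arcs added by a correlated monolabel triangle $(a_c,b_c,c_c,d_c)$. In every case, the identification places one vertex of $abw$ among $\{u,v,w\}$, so one edge of $abw$ coincides with an edge of $uvw$ and is already oriented in the right cyclic direction; for instance when $b = v$ the arc $(b,w) = (v,w)$ lies in $O$. It remains to show that the third edge of $abw$ exists, carries time-label exactly $t$, and is oriented so as to close the cycle.

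In the two pure $\Imp$ cases, the three facts follow from short contradiction arguments using only the definition of $\Imp(\mathcal{G})$ and Lemma~\ref{lem:non-dir-corr-triangle}. Taking $b = v$ (Case A) as illustration, the hypotheses are $\lambda(av) \leq t$ and $au \notin E$ or $\lambda(au) < t$. Then: (i) non-existence of $aw$, or $\lambda(aw) < t$, makes $(a,v) \to (w,v)$ an $\Imp$ arc and so forces $(w,v) \in O$ against $(v,w) \in O$; (ii) $\lambda(aw) > t$ turns $(u,v,w,a)$ into a correlated monolabel triangle, contradicting Lemma~\ref{lem:non-dir-corr-triangle}; (iii) once $\lambda(aw) = t$, assuming $(a,w) \in O$ produces the $\Imp$ arc $(a,w) \to (u,w)$, forcing $(u,w) \in O$ against $(w,u) \in O$. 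Hence $\lambda(aw) = t$ and $(w,a) \in O$, so $avw = abw$ is indeed directed in $O$. Case B is dual, with $bw$ playing the role of $aw$.

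The main obstacle is Cases C.1 and C.2, where the $\Aug$ arc arises from a correlated monolabel triangle $(a_c,b_c,c_c,d_c)$: here $\{a,b,u\}$ is itself a monolabel triangle of label $t$, so the strict label comparisons used above become equalities and the pure-$\Imp$ contradictions no longer fire directly. The plan is to recover strict inequalities via the fourth vertex $d_c$. In Case C.1 (with $b = v$), $\lambda(aw) > t$ is still ruled out, but now by chaining: $(v,w) \to (a,w) \in \Imp$ forces $(a,w) \in O$, which forces $(a,d_c) \in O$ and then $(v,d_c) \in O$, contradicting $(d_c,v) \in O$ --- the latter being forced by $(u,v)$ via the correlated-triangle inequalities $\lambda(ud_c) < t < \lambda(vd_c)$. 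The orientation of $aw$ then comes from a dichotomy on $d_c w$: if $d_c w \notin E$, the chain $(a,v) \to (d_c,v) \to (d_c,a) \to (w,a)$ of $\Imp$ arcs forces $(w,a) \in O$; if $d_c w \in E$, a short $\Imp$ contradiction against $(w,u) \in O$ rules out $\lambda(d_c w) \leq t$, whence $\{u,a,w\}$ together with $d_c$ is itself a correlated monolabel triangle, and the $\Aug$ arc $(w,u) \to (w,a)$ it contributes forces $(w,a) \in O$ directly. Case C.2 is handled by the dual chain.
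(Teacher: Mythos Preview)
Your four-case split matches the paper's exactly, and for the two $\Imp$ cases (your A and B, the paper's (ii) and (i)) the arguments are essentially the same, just phrased via $\Imp$ forcings rather than time-respecting paths. For the $\Aug$-only cases you take a somewhat different tack from the paper. In case (iv) (your C.1) the paper assumes $(a,w)\in O$ and argues toward a contradiction: it deduces $d_c w\in E$ from the path $d_c a w$, forces $(d_c,w)\in O$ and then $\lambda(d_c w)>t$, and finally notes that $(u,a,w,d_c)$ is a correlated monolabel triangle while $uaw$ is directed. You instead try to force $(w,a)\in O$ directly via $\Imp/\Aug$ chains after a case split on $d_c w$. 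Both routes ultimately land on the same correlated triangle $(u,a,w,d_c)$ in the sub-case $\lambda(d_c w)>t$.

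There is, however, a concrete step that fails in your C.1 chain for ruling out $\lambda(aw)>t$. You write that $(a,w)\in O$ ``forces $(a,d_c)\in O$'', but the direct $\Imp$ arc $(a,w)\to(a,d_c)$ requires $d_c w\notin E$ or $\lambda(d_c w)<\lambda(aw)$, which you do not yet know. The repair is actually simpler than the detour through $d_c$: since $\lambda(ua)=\lambda(uw)=t<\lambda(aw)$, there is a direct $\Imp$ arc $(a,w)\to(a,u)$, and $(a,u)\in O$ already contradicts $(u,a)\in O$, the latter being forced from $(a,v)$ along the standard path $(a,v)\to(d_c,v)\to(d_c,a)\to(u,a)$. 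With this fix --- and its dual in C.2 --- your plan goes through; the remaining pieces of your dichotomy on $d_c w$ check out, with the ``short $\Imp$ contradiction'' when $\lambda(d_c w)\leq t$ being the chain $(w,u)\to(w,d_c)\to(v,d_c)$, which contradicts $(d_c,v)\in O$ rather than $(w,u)$.
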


\begin{proof} Let $uvw$ be a directed triangle in $O$. Since every ATTO is a QTTO, by Lemma \ref{lem:qtto-monolabel-triangle}, $uvw$ is necessarily monolabel and we denote by $t$ be the common time-label. We will distinguish cases on whether the arc from $(a,b)$ to $(u,v)$ is an arc in $Imp(\mathcal{G})$ or not, and whether $a=u$ or $b=v$. 

\begin{figure}[H]
\begin{center}
\begin{tikzpicture}[scale=0.75]
\node[draw, circle, inner sep=1pt] (U) at (0,0) {$u$};
\node[draw, circle, inner sep=1pt] (W) at (1,1.5) {$w$};
\node[draw, circle, inner sep=1pt] (V) at (2,0) {$v$};
\node[draw, circle, inner sep=1pt] (B) at (1,-1.5) {$b$};
\node[text width=1cm] (I) at (0,-3.5) {$(i)$};
\draw[mid arrow] (U) -- (B) node[midway, left] {$\geq t$};
\draw[mid arrow] (U) -- (V) node[midway, below] {$t$};
\draw[mid arrow] (V) -- (W) node[midway, right] {$t$};
\draw[mid arrow] (W) -- (U) node[midway,left] {$t$};
\draw[-,dashed] (B) -- (V) node[midway,right] {$<t$};
\end{tikzpicture} \hspace{3ex}
\label{fig:case1}
\begin{tikzpicture}[scale=0.75]
\node[draw, circle, inner sep=1pt] (U) at (0,0) {$u$};
\node[draw, circle, inner sep=1pt] (W) at (1,1.5) {$w$};
\node[draw, circle, inner sep=1pt] (V) at (2,0) {$v$};
\node[draw, circle, inner sep=1pt] (A) at (1,-1.5) {$a$};
\node[text width=1cm] (I) at (0,-3.5) {$(ii)$};
\draw[mid arrow] (A) -- (V) node[midway, right] {$\leq t$};
\draw[mid arrow] (U) -- (V) node[midway, below] {$t$};
\draw[mid arrow] (V) -- (W) node[midway, right] {$t$};
\draw[mid arrow] (W) -- (U) node[midway,left] {$t$};
\draw[-, dashed] (A) -- (U) node[midway,left] {$<t$};
\end{tikzpicture}
\begin{tikzpicture}[scale=0.75]
\node[draw, circle, inner sep=1pt] (U) at (0,0) {$u$};
\node[draw, circle, inner sep=1pt] (W) at (1,1.5) {$w$};
\node[draw, circle, inner sep=1pt] (V) at (2,0) {$v$};
\node[draw, circle, inner sep=1pt] (B) at (1,-1.5) {$b$};
\node[draw, circle, inner sep=1pt] (X) at (3,-1.5) {$x$};
\node[text width=1cm] (I) at (0,-3.5) {$(iii)$};
\draw[mid arrow] (U) -- (V) node[midway, below] {$t$};
\draw[mid arrow] (V) -- (W) node[midway, right] {$t$};
\draw[mid arrow] (W) -- (U) node[midway, left] {$t$};
\draw[mid arrow] (B) -- (V) node[midway, right] {$t$};
\draw[mid arrow] (U) -- (A) node[midway, left] {$t$};
\draw[mid arrow] (X) -- (V) node[midway, right] {$\leq t$};
\draw[dashed] (X) -- (B) node[midway, below] {$< t$};
\draw[mid arrow] (X) .. controls (2.1,-3.7) and  (-1.8,-2) .. (U) node[midway, below left] {$> t$};
\end{tikzpicture} 
\begin{tikzpicture}[scale=0.75]
\node[draw, circle, inner sep=1pt] (U) at (0,0) {$u$};
\node[draw, circle, inner sep=1pt] (W) at (1,1.5) {$w$};
\node[draw, circle, inner sep=1pt] (V) at (2,0) {$v$};
\node[draw, circle, inner sep=1pt] (A) at (1,-1.5) {$a$};
\node[draw, circle, inner sep=1pt] (X) at (3,-1.5) {$x$};
\node[text width=1cm] (I) at (0,-3.5) {$(iv)$};
\draw[mid arrow] (U) -- (V) node[midway, below] {$t$};
\draw[mid arrow] (V) -- (W) node[midway, right] {$t$};
\draw[mid arrow] (W) -- (U) node[midway,left] {$t$};
\draw[mid arrow] (A) -- (V) node[midway, right] {$t$};
\draw[mid arrow] (U) -- (A) node[midway,left] {$t$};
\draw[mid arrow] (X) -- (V) node[midway, right] {$>t$};
\draw[mid arrow] (X) -- (A) node[midway, below] {$\leq t$};
\draw[dashed] (U) .. controls (-1.8,-2) and (2.1,-3.7) .. (X) node[midway, below left] { $<t$};
\end{tikzpicture}
\end{center}
\end{figure}

\begin{enumerate}[label=\roman*)]
 
\item Let $a=u$ and $((u,b),(u,v))$ be an arc of $Imp(\mathcal{G})$. Then, $\lambda(ub)\geq \lambda (uv)$ and either $bv\not\in E$ or $\lambda(bv)<\lambda(ub)$. But now, $wub$ is a directed path with $\lambda(wu) \leq \lambda(ub)$. Since $O$ is ATTO, we must have $wb\in E$ with $\lambda(wb)\geq \lambda(ub)\geq t$. It is not possible to orient $wb$ from $w$ towards $b$ in $O$, as it would create the time-respecting directed path $vwb$ and by hypothesis $bv\not\in E$ or $\lambda(bv)<t$, which would contradict $O$ being ATTO. We conclude that $(b,w)\in O$, which implies triangle $ubw = abw$ is directed.

\vspace{1ex}     
     
\item Let $b=v$ and $((a,v),(u,v))$ be an arc of $Imp(\mathcal{G})$. For this second case,  $\lambda(av)\leq t$ and either $au\not\in E$ or $\lambda(au)<t$. Since $O$ is ATTO and $avw$ is a time-respecting directed path, we must have $aw\in E$ and $\lambda(aw)\geq t$. Suppose $aw$ is oriented  from $a$ towards $w$ in $O$ and consider the directed path $awu$. Since $au\not\in E$ or $\lambda(au)<t$ and, once again, $O$ is ATTO, we must have $\lambda(aw)>t$. Then, the quadruple $(u,v,w,a)$ forms a correlated monolabel triangle. But this is a contradiction by Lemma \ref{lem:non-dir-corr-triangle} since triangle $uvw$ is directed in $O$, an ATTO. Hence, $aw$ must be oriented from $w$ towards $a$ in $O$, forming the directed triangle $avw = abw$.

\vspace{1ex}

\item Let $a=u$ and $((u,b),(u,v))$ be an arc of $Aug(\mathcal{G})$ but not of $Imp(\mathcal{G})$. For this case to be produced, we claim the existence of a vertex $x$ such that $(b,v,u,x)$ is a correlated monolabel triangle: as $(v,u)$ forced $(b,v)$ in $Imp(G)$, arcs $((v,u),(b,u))$ and $((u,b),(u,v))$ were added to $Aug(G)$. Suppose $(v,x) \in O$. Since $O$ is ATTO and $vxu$ form a directed path with $\lambda(xu) > \lambda(vu)$, this forces $(u,x) \in O$ which in turn forces $(w,x) \in O$ with $\lambda(wx) \geq \lambda(ux) > t$. But then, $vwx$ forms a time-respecting path with $\lambda(wx) > t$ and $\lambda(vx) \leq t$, a contradiction to $O$ being ATTO. Hence, $(x,v) \in O$. Then, necessarily $(b,v)\in O$ and $wx \in E$ with $\lambda(wx) \geq t$. Suppose $(w,x) \in O$, then $vwx$ must form a monolabel triangle. That is, $\lambda(xv) = \lambda(wx) = t$. But then, both possible orientations $(u,x)$ and $(x,u)$ of $xu$ lead to a contradiction as the triplet $uwx$ would fail to satisfy the ATTO conditions. Therefore, we claim $(x,w) \in O$. Also, we deduce $(x,u) \in O$ as otherwise $xuw$ would form a non-monolabel directed triangle. As $wub$ form a monolabel directed path, either $(w,b)$ or $(b,w)$ belong to $O$, with $\lambda(bw) \geq t$. If $(w,b) \in O$, we obtain a correlated monolabel triangle $(b,v,w,x)$ directed in an ATTO, a contradiction by Lemma \ref{lem:non-dir-corr-triangle}. Finally, if $(b,w) \in O$ we have found our directed triangle $ubw = abw$.

\vspace{1ex}

\item Let $b=v$ and $((a,v),(u,v))$ be an arc of $Aug(\mathcal{G})$ but not of $Imp(\mathcal{G})$. Similarly to the previous case, there must exist a vertex $x$ such that $(u,a,v,x)$ is a correlated monolabel triangle. As observed when such quadruples were defined, since $(a,v)$ belongs to $O$, then so do $(x,v)$, $(x,a)$ and $(u,a)$. Consider the monolabel directed path $avw$. As $O$ is ATTO, $aw \in E$ with $\lambda(aw) \geq t$. Notice that if $(a,w) \in O$, then $uaw$ is a directed triangle and if $(w,a) \in O$, then triangle $wav$ is directed. As $O$ is ATTO, the only way for it to contain a directed triangle is for it to be monolabel, we deduce $\lambda(aw) =t$. Assume for contradiction that $aw$ is oriented from $a$ towards $w$. Then, the directed path $xaw$, with $\lambda(xa) \leq \lambda(aw)$, implies the existence of an edge $xw$ such that $\lambda(xw) \geq t$. It cannot be oriented from $w$ towards $x$ because it would create a non-monolabel directed triangle $wxv$. Therefore, $(x,w)\in O$. Since by hypothesis $ux \notin E$ or $\lambda(ux) < t$, if $\lambda(xw) = t$, $O$ would fail being ATTO. Hence, $\lambda(xw) > t$. But now, $(u,a,w,x)$ form a correlated monolabel triangle directed in $O$, a contradiction by Lemma \ref{lem:non-dir-corr-triangle}.
\end{enumerate}
\end{proof}

\begin{lemma}
Let $O$ be an ATTO of a temporal graph   $\mathcal{G}$. If $uvw$ is a directed triangle in $O$ and if there exists an arc $(a,b) \in O$ forcing $(u,v)$ in $Aug(\mathcal{G})$, then $abw$ is also a directed triangle in $O$.
\label{lem:iterative-triangle}
\end{lemma}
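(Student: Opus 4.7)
The plan is a straightforward induction on the length of the directed path in $Aug(\mathcal{G})$ that witnesses the forcing of $(u,v)$ by $(a,b)$.

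First, I would set up the induction. By definition of forcing, there exists a directed path
$(a,b) = (x_0,y_0) \to (x_1,y_1) \to \cdots \to (x_k,y_k) = (u,v)$
in $Aug(\mathcal{G})$. If $k = 0$, the statement is just the hypothesis that $uvw$ is a directed triangle in $O$. If $k = 1$, the statement is exactly Lemma~\ref{lem:triangle}. So I would proceed by induction on $k$, with the base cases as above.

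For the inductive step, assume the statement holds whenever the forcing is witnessed by a path of length at most $k-1$, and consider a path of length $k$ as above. Applying the inductive hypothesis to the sub-path from $(x_1,y_1)$ to $(x_k,y_k) = (u,v)$, of length $k-1$, I obtain that $x_1 y_1 w$ is a directed triangle in $O$. Now I would apply Lemma~\ref{lem:triangle} to the directed triangle $x_1 y_1 w$ together with the single arc $((x_0,y_0),(x_1,y_1))$ of $Aug(\mathcal{G})$: this gives that $x_0 y_0 w = abw$ is a directed triangle in $O$, completing the induction.

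I do not anticipate a serious obstacle here, as the whole argument is bootstrapped from Lemma~\ref{lem:triangle}. The only minor point to be careful about is ensuring that at each step the triangle produced is genuinely a triangle (three distinct vertices forming a directed $3$-cycle), so that the next application of Lemma~\ref{lem:triangle} is valid. This is guaranteed because Lemma~\ref{lem:triangle} itself outputs a directed triangle as its conclusion, hence the hypothesis for the next application is verified verbatim. No additional case analysis about the nature of the arcs in $Aug(\mathcal{G})$ is needed, since Lemma~\ref{lem:triangle} already handles both the $Imp(\mathcal{G})$ arcs and the arcs added from correlated monolabel triangles.
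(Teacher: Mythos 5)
Your proof is correct and is essentially the paper's own argument: both iterate Lemma~\ref{lem:triangle} along the directed path in $Aug(\mathcal{G})$ from $(u,v)$ back to $(a,b)$, keeping $w$ as the common apex at each step; you merely phrase the iteration as a formal induction on the path length. No substantive difference.
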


\begin{proof}
As $(a,b)$ forces $(u,v)$, there exists a directed path from $(a,b)$ to $(u,v)$ in $Aug(\mathcal{G})$. Then, we can repeatedly apply Lemma \ref{lem:triangle} starting from the second to last arc in the directed path, i.e., arc directly forcing $(u,v)$, until reaching arc $(a,b)$. At each step, we get a new  directed triangle containing an arc forced by the immediately previous arc in the path, this allows us to continue to apply Lemma \ref{lem:triangle}.  When reaching arc $(a,b)$, Lemma \ref{lem:triangle} will finally guarantee that $abw$ is also a directed triangle in $O$.
\end{proof}

\begin{lemma}
\label{lem:atto-directed-triangle-no-arc-in-ness}
Let $O$ be an ATTO of a temporal graph $\mathcal{G}$. No directed triangle in $O$ can have an arc in $Ness(\mathcal{G})$.
\end{lemma}
\begin{proof}
Assume by contradiction that  $uvw$ is a directed triangle in $O$ such that $(u,v)\in Ness(\mathcal{G})$. Then, as $(v,u)$ forces $(u,v)$, there exists a directed path $\mu$ in $Aug(\mathcal{G})$ from $(v,u)$ to $(u,v)$. By Lemma \ref{lem:contrapos}, $Aug(\mathcal{G})$ will also contain the \enquote{contraposite} path of $\mu$, that is the path $\mu'$ from $(v,u)$ to $(u,v)$ consisting of all arcs $((y_2,x_2),(y_1,x_1))$ such that  $((x_1,y_1),(x_2,y_2))$ is an arc in $\mu$. Let $(x,y)$ be the first node in $\mu$ such that $(x,y) \in O$. Since $O$ is ATTO, all nodes in the subpath of $\mu$ from $(x,y)$ to $(u,v)$ will also be included in $O$. Consider $(x',y')$ the node immediately before $(x,y)$ in $\mu$. As $(x',y') \notin O$, necessarily $(y',x') \in O$. Also, $(y',x')$ belongs to $\mu'$, and therefore forces $(u,v)$ in $Aug(\mathcal{G})$. Then, both $(y',x')$ and $(x,y)$ are arcs in $O$ forcing arc $(u,v)$ in $Aug(\mathcal{G})$ and we can apply Lemma \ref{lem:iterative-triangle} twice to state that $y'x'w$ and $xyw$ are two directed triangles in $O$. But now, since $((x',y'),(x,y)) \in Aug(\mathcal{G})$, either $x' = x$ or $y' = y$. If $x' = x$, the directed triangle $y'xw$ implies $(x,w)\in O$ while the directed triangle $xyw$ supposes the inverse orientation, i.e., $(w,x)$. If $y' = y$, we get a similar conflict with the orientation of edge $wy$. We conclude that triangles $y'x'w$ and $xyw$ cannot be both directed in $O$ and reach a contradiction.
\end{proof}

In the lemma below, we generalize to ATTO a property previously established for TTO: no directed cycle in an ATTO of a temporal graph $\mG$ is formed exclusively by arcs in $Ness(\mG)$.
\begin{lemma}
\label{lem:ness-acyclic}
If a temporal graph $\mathcal{G}=(G,\lambda)$ admits an ATTO, then $Ness(\mathcal{G})$ defines an acyclic partial orientation of $\mG$.
\end{lemma}
\begin{proof}
 Assume by contradiction that $Ness(\mathcal{G})$ induces a directed cycle in $O$ and let $C$ be a minimal such cycle. Let $O$ be an ATTO of $\mathcal{G}$ and recall that, by Lemma \ref{lem:necessary},  $Ness(\mG)\subset O$. Since $O$ is ATTO, by Lemma \ref{lem:atto-directed-triangle-no-arc-in-ness}, we get $|C| > 3$. We prove first that $C$ is monolabel, assume for contradiction this is not the case. Then, there must exist two consecutive arcs $(a,b)$ and $(b, c)$ in $C$ such that $\lambda(ab)<\lambda(bc)$. As $O$ is ATTO,  $(a,c) \in O$ with $\lambda(ac) \geq \lambda(bc) > \lambda(ab)$ and thus $(b,c)$ forces arc $(a,c)$ in $Aug(\mathcal{G})$. Since $(b,c) \in Ness(\mathcal{G})$, by Lemma \ref{lem:necessary}, $(a,c) \in Ness(\mathcal{G})$. Hence, we reached a smaller directed cycle induced by $Ness(\mathcal{G})$, which contradicts $C$ being the minimal one. $C$ is therefore monolabel and we denote by $v_1, ..., v_k$ its vertices. As $O$ is ATTO and $v_1v_2v_3$ form a monolabel directed path in $O$, then necessarily $v_1v_3 \in E$. If $(v_3,v_1) \in O$, we have found a directed triangle $v_1v_2v_3$ such that $(v_2,v_3) \in C$, a contradiction to Lemma  \ref{lem:atto-directed-triangle-no-arc-in-ness}. Then, $(v_1,v_3) \in O$. If $\lambda(v_1v_3) > t =\lambda(v_2v_3) =  \lambda(v_1v_2)$, then $(v_2,v_3)$ forces $(v_1,v_3)$ in $Aug(\mG)$ and thus, by Lemma \ref{lem:necessary}, $(v_1,v_3)\in Ness(\mG)$, yielding a directed cycle formed by arcs in $Ness(\mG)$ shorter than $C$, a contradiction. Hence $\lambda(v_1v_3) = t=\lambda(v_3,v_4)$ and we can repeat the same argument with vertices $v_1,v_3$ and $v_4$ to get $(v_1,v_4) \in O$ with $\lambda(v_1v_4)=t$. Hence, in order to avoid creating a directed triangle $v_1v_iv_{i+1}$ with $(v_i,v_{i+1}) \in C$, necessarily $(v_1,v_i)\in O$ for all $i$ such that $1 < i < k$. But then, $v_{k-1}v_kv_1$ form a directed triangle with $(v_{k-1},v_k) \in C\subset Ness(\mG)$, contradicting again Lemma \ref{lem:atto-directed-triangle-no-arc-in-ness}.
\end{proof}

We are now ready to prove our main result.

\begin{theorem}
 Let $\mathcal{G}$ be a temporal graph. The following assertions are equivalent:
\begin{enumerate}[label=\roman*)]
     \item $\mG$ admits a TTO.
     \item $\mathcal{G}$ admits an ATTO.
     \item $Ness(\mathcal{G})$ induces an acyclic digraph.
\end{enumerate}
\label{theo:gouila-houri-generalization}
\end{theorem}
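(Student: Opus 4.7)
The plan is to prove the cyclic chain (i) $\Rightarrow$ (ii) $\Rightarrow$ (iii) $\Rightarrow$ (i). The first implication is routine: any TTO $O$ is an orientation (so $O \cap O^{R} = \emptyset$) and every arc of $Aug(\mathcal{G})$ encodes a constraint satisfied by TTOs --- those of $Imp(\mathcal{G})$ directly from temporal transitivity, the additional ones from correlated monolabel triangles by the argument given just before the definition of $Aug$. The second implication is exactly Lemma~\ref{lem:ness-acyclic}.

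For (iii) $\Rightarrow$ (i) the plan is a greedy construction. Start with the partial orientation $O_{0}=Ness(\mathcal{G})$, which is valid since acyclicity forbids any 2-cycle and $Ness(\mathcal{G})^{+}=Ness(\mathcal{G})$ was already observed. Iteratively, as long as some edge $uv$ is unoriented, tentatively set $(u,v)$ and close under forward-forcing in $Aug(\mathcal{G})$, reverting to $(v,u)$ upon conflict. Two claims must be verified. \textbf{(a)} One of the two tentative choices always succeeds. This relies on the inherent symmetry of the definitions, namely $((a,b),(c,d))\in Aug(\mathcal{G})$ iff $((d,c),(b,a))\in Aug(\mathcal{G})$, so endpoint-wise reversal of a forcing path yields another forcing path. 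From this, a conflict between $\{(u,v)\}^{+}$ and an already-oriented arc of $O_{i}$ would entail $(v,u)\in O_{i}^{+}=O_{i}$, impossible since $uv$ is unoriented; the only remaining failure mode is $(u,v)$ forcing both orientations of some edge, which by the same symmetry is equivalent to $(v,u)\in Ness(\mathcal{G})$. A simultaneous failure of both choices would therefore require $\{(u,v),(v,u)\}\subseteq Ness(\mathcal{G})$, a 2-cycle contradicting acyclicity. \textbf{(b)} The resulting ATTO $O$ contains no directed monolabel triangle. Assume for contradiction that $uvw$ is directed in $O$. No arc of the triangle lies in $Ness(\mathcal{G})$: if, say, $(u,v)\in Ness(\mathcal{G})$, Lemma~\ref{lem:iterative-triangle} applied to the forcing $(v,u)\to(u,v)$ would give that $vuw$ is also directed in $O$, an immediate contradiction. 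Thus the three arcs were introduced by free choices. I then pass to a minimal-counterexample framework: among the ATTOs extending $Ness(\mathcal{G})$ (which exist by~\textbf{(a)}), pick one with the fewest directed monolabel triangles, then reverse the implication class of an arc of an offending triangle. Using Lemma~\ref{lem:iterative-triangle} to track which triangles are affected in tandem with the offending one, and the acyclicity of $Ness(\mathcal{G})$ to rule out the appearance of new directed triangles, the count strictly decreases, contradicting minimality.

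The principal obstacle is subclaim \textbf{(b)}. The triangle lemma provides local information about a single implication class of $Aug(\mathcal{G})$ at a time, while the minimality argument requires a global monotone behaviour of the number of directed monolabel triangles under class reversals. Making this precise demands a careful case analysis of how arcs in adjacent implication classes interact with the correlated-monolabel-triangle structure around the apex $w$, ensuring that no newly created triangle can survive without producing a forbidden cycle in $Ness(\mathcal{G})$.
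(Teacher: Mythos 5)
Your implications $i)\Rightarrow ii)$, $ii)\Rightarrow iii)$, and subclaim \textbf{(a)} are fine and essentially coincide with the paper's argument (the paper even shows the slightly stronger fact that for \emph{either} orientation of an unoriented edge $ab$, the set $O\cup\{(a,b)\}^{+}$ remains an ATTO, so no backtracking is needed). The genuine gap is exactly where you locate it: subclaim \textbf{(b)}. As written, it is not a proof. You orient the remaining edges in an arbitrary order and then try to repair directed monolabel triangles a posteriori by reversing the ``implication class'' of an offending arc and arguing the number of directed triangles strictly decreases. Nothing in your sketch establishes that such a reversal leaves an orientation that is still an ATTO containing $Ness(\mathcal{G})$, nor that no new directed monolabel triangles appear; in the temporal setting the forcing relation in $Aug(\mathcal{G})$ is not symmetric, the ``classes'' are not the clean $\Gamma$-classes of the static theory, and this class-reversal route is precisely the Golumbic-style strategy whose temporal triangle lemma (in Mertzios et al.) the paper exhibits a counterexample to. Lemma~\ref{lem:iterative-triangle} alone does not give the global monotonicity you need, and you concede this is the principal obstacle --- so the implication $iii)\Rightarrow i)$ is not established by your proposal.

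The missing idea, which is the heart of the paper's proof, is to control \emph{which} edge is oriented at each step rather than repairing afterwards. Since $Ness(\mathcal{G})$ is acyclic, fix a vertex order $\sigma$ with every necessary arc forward, initialize $O=Ness(\mathcal{G})$, and at each step orient the lexicographically minimal (w.r.t.\ $\sigma$) unoriented edge $ab$ forward, adding $\{(a,b)\}^{+}$. Your claim \textbf{(a)} shows the final $O$ is an ATTO. If $O$ contained a directed monolabel triangle, take the lexicographically minimal one, $uvw$; it must contain a backward arc $(u,v)$, which is neither necessary nor a chosen edge, hence lies in $\{(a,b)\}^{+}$ for some chosen edge $ab$ that is lexicographically smaller than $uv$. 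Lemma~\ref{lem:iterative-triangle} then makes $abw$ a directed triangle, contradicting minimality. This lexicographic selection is what converts the triangle lemma's local information into the global statement you were trying to obtain by class reversals; without it (or an equally precise replacement for your ``count strictly decreases'' claim), subclaim \textbf{(b)} remains unproved.
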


\begin{proof}
The implication from $i)$ to $ii)$ is straightforward as all TTO are ATTO and the one from $ii)$ to $iii)$ is proved by Lemma \ref{lem:ness-acyclic}. To prove the implication from $iii)$ to $i)$, we will mimic the proof provided by Hell et al. in \cite{hell1995lexicographic} for the static case, by using an analogous lexicographic argument. The proof will provide a simple algorithm to decide whether a given temporal graph $\mathcal{G}$ is a comparability temporal graph and construct a TTO of $\mathcal{G}$ if the answer is YES, as detailed in Subsection \ref{subsect:algorithmic-aspects}. Let $O$ be a partial ATTO of $\mathcal{G}$ containing $Ness(\mathcal{G})$. We start by showing that taking any edge $ab$ not yet oriented in $O$, ensures that $O'=O\cup\{(a,b)\}^+$ will also be an ATTO of $\mathcal{G}$. As $O$ is ATTO, it is clear that $O'^+=O'$ holds. Assume for contradiction that $O'\cap O'^{R} \neq \emptyset$, i.e., there exists a pair of vertices $u,v$ such that $\{(u,v),(v,u)\}\subset O'$. Since $O$ is ATTO, both arcs cannot belong to $O$, so assume $(u,v)\in \{(a,b)\}^+$. It is not possible that $(v,u)\in \{(a,b)\}^+$ as well, as this would imply $(b,a)\in \{(u,v)\}^+$ and thus $(b,a)\in \{(a,b)\}^+$, meaning $(b,a) \in Ness(G)$ which contradicts the assumption of edge $ab$ being not yet oriented in $O$. Hence, $(v,u)\in O$. Observe that $(u,v)\in \{(a,b)\}^+$ implies $(b,a)\in \{(v,u)\}^+$. But now, since $O^+=O$, we must have $(b,a)\in O$, again contradicting the assumption of $ab$ unoriented in $O$. As $Ness(\mathcal{G})$ induces an acyclic digraph, there exists an order $\sigma$ on the vertices of $\mathcal{G}$ such that all arcs in $Ness(\mathcal{G})$ are oriented forward, i.e., if $(u,v)\in Ness(\mathcal{G})$ then $u\prec_{\sigma} v$. We propose to construct an orientation $O$ in a greedy lexicographic fashion, following the order defined by $\sigma$. We initially set $O=Ness(\mathcal{G})$. Then, while $O$ is not yet a complete orientation of $\mathcal{G}$, we repeat what follows. We consider the unoriented edge $ab$, with $a \prec_{\sigma} b$, such that it is lexicographically minimal with respect to $\sigma$, and we add $\{(a,b)\}^+$ to $O$. By the observation above, as $O$ was initially an ATTO of $\mathcal{G}$ containing $Ness(\mathcal{G})$, the final orientation obtained by this procedure will also be ATTO. It remains to show that $O$ does not contain directed triangles. Assume by contradiction it contains at least one, and consider the monolabel directed triangle $uvw$ that is lexicographically minimal with respect to the position in $\sigma$ of its three vertices. Then, $uvw$ contains at least one arc oriented backwards by $\sigma$, without loss of generality take $(u,v)$ as this backward arc with $v \prec_{\sigma} u$. Since it was oriented backwards, $(u,v)\notin Ness(\mathcal{G})$ and it is not possible that $(u,v)$ was chosen as next unoriented edge $ab$ in the algorithm above. Hence, the orientation of $(u,v)$ was assigned because $(u,v) \in \{(a,b)\}^+$ for some edge $ab$, chosen as minimal unoriented edge at some step in our greedy algorithm. As at this step $uv$ was also an unoriented edge, this implies that $ab$ is lexicographically smaller that $uv$. But now, since there is a directed path from $(a,b)$ to $(u,v)$ in $Aug(\mathcal{G})$, by applying Lemma \ref{lem:iterative-triangle}, we get that $abw$ is also a directed triangle in $O$, contradicting the lexicographic minimality of $uvw$.  \end{proof}

Note that not all ATTO are TTO, as an ATTO might contain directed monolabel triangles, that do not form any correlated monolabel triangle with a fourth vertex. Take a single monolabel triangle as an example where a directed orientation of its arcs is an ATTO. Exactly like in the static case, where it is important to note that the existence of a quasi-transitive orientation $O$ of $G$ proves the existence of a transitive orientation $O'$ of $G$, although $O$ might not be transitive as it can contain directed triangles. This is why the lexicographic strategy is the key argument in our method, as it was in \cite{hell2014ordering} for the static case. 
Additionally, we ensure that Theorem \ref{theo:gouila-houri-generalization} is a correct extension of the static case. Note that static graphs can be considered as monolabel temporal graphs. As no correlated monolabel triangle can be induced by a monolabel temporal graph $\mathcal{G}$, it follows that in that case $Aug(\mathcal{G})$ is equal to $Imp(\mathcal{G})$. Then, it is easy to see that the double implication between $i)$ and $ii)$ corresponds to the Ghouila-Houri result, i.e. Theorem \ref{theo:GH}. Moreover, in the static scenario the forcing relations are symmetric, i.e. if $(a,b)$ forces $(b,a)$ in $Aug(\mathcal{G})$, then $(b,a)$ forces $(a,b)$ in return. Hence,  if $(a,b) \in Ness(\mathcal{G})$ then $(b,a)$ also belongs to $ Ness(\mathcal{G})$. Therefore, $Ness(\mathcal{G}) \neq \emptyset$ implies that $\mathcal{G}$ is not a comparability graph as $Ness(\mathcal{G})$ will automatically induce a cycle as every necessary arc will be contained in a 2-cycle.

\subsection{Algorithmic aspects}
\label{subsect:algorithmic-aspects}

The proof presented for Theorem \ref{theo:gouila-houri-generalization} establishes the algorithm we will propose for deciding whether a given temporal graph $\mathcal{G}$ is a comparability temporal graph, and if so, obtaining a TTO of its edges. The strategy will start by obtaining the set $Ness(\mathcal{G})$ of necessary arcs of $\mathcal{G}$. For this, it is first required to construct the implication digraph $Imp(\mathcal{G})$ and then to add the appropriate constraints to obtain the augmented digraph $Aug(\mathcal{G})$. We start by observing that the size of both digraphs is in $O(nm)$, which is the order of the maximum number of triplets of vertices, forming a triangle or a three-vertex path and therefore potentially producing an implication constraint. Consider $f(n,m)$ and $g(n,m)$ the complexity functions of computing digraphs $Imp(\mathcal{G})$ and $Aug(\mathcal{G})$ respectively. Then, by the previous observations, we claim that deciding whether $\mathcal{G}$ admits a QTTO (resp. ATTO) can be solved in $\max\{f(n,m), O(nm)\}$ (resp. $\max\{g(n,m), O(nm)\}$) by a 2-SAT linear time algorithm. To obtain $Imp(\mathcal{G})$, only triangles and three-vertex paths need to be evaluated as no other structure can produce an implication, i.e., an arc in the digraph. Then, it is easy to see that $f(n,m) \in O(nm)$. However, for $Aug(\mathcal{G})$, we need to additionally consider correlated monolabel triangles, formed by a specific quadruple of vertices. To do this, two different strategies can be implemented, and the structure of the graph will determine which of the two is most efficient. First, one can consider for each monolabel triangle $abc$, all other vertex $d$ and verify whether the quadruple forms a correlated monolabel triangle. If we consider $k$ as the number of monolabel triangles, then clearly this procedure is in $O(kn)$. Alternatively, one can consider every pair of edges $ac, bd$ in $E$ and evaluate whether $(a,b,c,d)$ satisfies the desired configuration. This can be done in $O(m^2)$. When $k$ is small, the first strategy will be more efficient, however this is not always the case as $k \in O(nm)$. Therefore, we state that $g(n,m) \in \min\{O(kn),O(m^2)\}$, and as Theorem \ref{theo:gouila-houri-generalization} proves that any temporal graph $\mathcal{G}$ admitting an ATTO is a comparability temporal graph, we claim that the recognition problem for comparability temporal graphs can be solved in $O(nm + \min\{kn,m^2\})$. Once the digraph $Aug(\mathcal{G})$ is constructed, it is straightforward to obtain $Ness(\mathcal{G})$ by Tarjan's \cite{Tarjan72} linear strategy in the size of the digraph, that is $O(nm)$. Again by Theorem \ref{theo:gouila-houri-generalization}, we know that $Ness(\mathcal{G})$ is a directed acyclic graph and we can obtain in $O(nm)$ an order $\sigma$ such that all necessary arcs are oriented from left to right according to $\sigma$, i.e., all $(a,b) \in Ness(\mathcal{G})$ satisfy $a \prec_{\sigma} b$. We can extend $\sigma$ with the rest of the vertices in $\mathcal{G}$ in an arbitrary way. Then, we consider $Ness(\mathcal{G})$ as the initial orientation $O$ and we greedily orient the remaining edges by selecting at every step the unoriented edge $ab$, with $a \prec_{\sigma} b$, such that $a$ is lexicographically minimum. We then fix orientations $\{(a,b)\}^{+}$ in $O$. Using the same arguments as in proof of Theorem $\ref{theo:gouila-houri-generalization}$, we state that this results in a TTO of $\mathcal{G}$. Note that adding $(a,b)$ to $O$ will fix the orientation of all the arcs forced in $\{(a,b)\}^{+}$. Then, after orienting $\{(a,b)\}^{+}$ we can remove all these vertices from $Aug(\mathcal{G})$, as all vertices in $\{(b,a)\}^{-}$, and claim the complexity of the greedy orientation construction can be done in $O(nm)$. To conclude, we state that the overall time complexity of the algorithm is of $O(nm + \min\{kn,m^2\})$.

\begin{theorem}
Deciding if $\mathcal{G}$ is a comparability temporal graph and constructing a TTO if the answer is YES can be done in $O(nm + \min\{kn,m^2\})$.
\end{theorem}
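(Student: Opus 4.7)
The plan is to follow verbatim the algorithmic template already sketched in the paragraph above the theorem, and check that every stage fits within the claimed budget, with correctness inherited from Theorem \ref{theo:gouila-houri-generalization}.

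First I would construct the augmented implication digraph $Aug(\mathcal{G})$. The arcs coming from $Imp(\mathcal{G})$ are produced by iterating over every pair of adjacent edges (a triangle or a length-two path), so they can be listed in $O(nm)$ time, and $|Imp(\mathcal{G})| = O(nm)$. The additional arcs correspond to correlated monolabel triangles; as argued in the preceding discussion, these can be enumerated either in $O(kn)$ by scanning each monolabel triangle against every fourth vertex, or in $O(m^2)$ by scanning each pair $(ac,bd)$ of edges and testing the correlation conditions. Taking the better of the two yields the $\min\{O(kn),O(m^2)\}$ factor, and a fortiori $|Aug(\mathcal{G})| = O(nm)$.

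Next I would compute $Ness(\mathcal{G})$ by running Tarjan's strongly connected component algorithm on $Aug(\mathcal{G})$ in time $O(n+|Aug(\mathcal{G})|) = O(nm)$, because by definition $(x,y)\in Ness(\mathcal{G})$ iff $(x,y)$ and $(y,x)$ lie in the same SCC. If some arc $(x,y)$ and its reverse $(y,x)$ both appear in $Ness(\mathcal{G})$, then by Lemma \ref{lem:necessary2} (the contrapositive of it) $\mathcal{G}$ admits no TTO, and we reject. Otherwise we contract SCCs and topologically sort the resulting DAG in $O(nm)$ to obtain an ordering $\sigma$ of $V(\mathcal{G})$ in which every necessary arc points forward, extending $\sigma$ arbitrarily outside the support of $Ness(\mathcal{G})$.

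I would then initialize $O := Ness(\mathcal{G})$ and run the lexicographic greedy loop of Theorem \ref{theo:gouila-houri-generalization}: at each step pick the unoriented edge $ab$, $a\prec_\sigma b$, with $a$ minimum (breaking ties by $b$), and add $\{(a,b)\}^+$ to $O$. Correctness is exactly the content of the proof of direction $iii)\Rightarrow i)$ in Theorem \ref{theo:gouila-houri-generalization}, so nothing new is required there. The main thing to verify is the running-time: the total cost over the whole loop must remain $O(nm)$, not $O(nm)$ per iteration. For this I would maintain $Aug(\mathcal{G})$ as a working digraph, and after each propagation remove from it every vertex corresponding to an arc now fixed in $O$ (i.e.\ the closures $\{(a,b)\}^+$ and $\{(b,a)\}^-$); each arc of $Aug(\mathcal{G})$ is then traversed at most once across all propagations, so the amortized total work is proportional to $|Aug(\mathcal{G})|=O(nm)$. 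Selecting the next minimum unoriented edge can be supported by keeping, for each vertex in the order $\sigma$, a pointer to its smallest still-unoriented neighbor, updated as edges get oriented; the total update work is again $O(m)$ over the execution.

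Summing the three stages gives $\max\{O(nm), \min\{O(kn),O(m^2)\}\}$, which establishes the theorem. The only nontrivial point I expect is the amortization argument for the greedy phase; everything else is essentially bookkeeping on top of Theorem \ref{theo:gouila-houri-generalization} and Tarjan's algorithm.
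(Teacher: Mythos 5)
Your proposal follows the paper's algorithmic template almost exactly (construction of $Imp(\mathcal{G})$ and $Aug(\mathcal{G})$ with the two enumeration strategies for correlated monolabel triangles, the greedy lexicographic phase with deletion of already-oriented vertices from $Aug(\mathcal{G})$ to amortize the propagation cost), but there is one genuine error in the middle stage: your characterization ``$(x,y)\in Ness(\mathcal{G})$ iff $(x,y)$ and $(y,x)$ lie in the same SCC of $Aug(\mathcal{G})$'' is not the definition and is false in general. An arc $(x,y)$ is necessary when $(y,x)$ forces $(x,y)$, i.e.\ when there is a directed path from $(y,x)$ to $(x,y)$ in $Aug(\mathcal{G})$; this is one-way reachability, not mutual reachability. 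If $(x,y)$ and $(y,x)$ were in the same SCC, then \emph{both} arcs would be necessary, so $Ness(\mathcal{G})\cap Ness(\mathcal{G})^{R}\neq\emptyset$ and, by Lemma~\ref{lem:necessary2}, $\mathcal{G}$ would not be $T$-comparability. In other words, your SCC test characterizes a rejection condition (it is essentially the 2-SAT unsatisfiability test, so your recognition step still happens to answer correctly), but on every YES instance it returns $Ness(\mathcal{G})=\emptyset$.

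This breaks the construction phase. The correctness argument of Theorem~\ref{theo:gouila-houri-generalization} requires both that the initial partial orientation contain $Ness(\mathcal{G})$ and that $\sigma$ be a linear extension putting all necessary arcs forward: the step showing $O'=O\cup\{(a,b)\}^{+}$ remains an orientation derives a contradiction from ``$(b,a)\in Ness(\mathcal{G})$ yet $ab$ unoriented,'' and the triangle-elimination argument uses that backward arcs of $\sigma$ are never necessary. Starting from $O=\emptyset$ with an arbitrary $\sigma$, the greedy loop may pick an edge $ab$ with $(b,a)$ necessary and orient it as $(a,b)$, whereupon $\{(a,b)\}^{+}$ contains $(b,a)$ and $O'$ is no longer an orientation. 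To repair this you must actually compute, for each arc $(x,y)$, whether $(y,x)$ reaches $(x,y)$ in $Aug(\mathcal{G})$ (the paper asserts this can be extracted within $O(nm)$ from Tarjan's computation on $Aug(\mathcal{G})$, exploiting the skew-symmetry of the implication digraph), and then seed the greedy phase with this set and with a topological order of it.
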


The bottleneck of our algorithm can be the computation of the correlated monolabel triangles when $k \in O(mn)$ as the construction of the digraph $Aug(\mathcal{G})$ will require $O(m^2)$. On the other hand, since up to our knowledge there is not yet a linear time algorithm to verify whether an orientation is transitive in the static case, and temporal transitive orientations generalize transitive ones, this supposes a lower bound for the comparability temporal graph recognition problem.

\subsection{Multilabel temporal transitive orientations}

Since most temporal networks allow resources to traverse their arcs at multiple given times, it is natural to try to formulate an extension to the notion of temporal transitivity to multilabel temporal graphs, i.e., temporal graphs $\mathcal{G}=(G,\lambda)$ with $G=(V,E)$ and $\lambda: E \rightarrow 2^{\mathds{N}}$. We shortly introduce the motivation and main definitions, further clarifications can be found in the Appendix section of the related version of this article. Let us recall the initial motivation behind the model proposed by Mertzios et al.\cite{mertzios2025complexity}. The authors introduced temporal transitive orientations as a way to avoid \enquote{shortcuts} in the transmission of information by nodes in a communication network. This way, if node $a$ sends information to node $b$ and node $b$ then sends it to node $c$, the latter should be able to verify this information from source $a$ at a later, or equal, time than when receiving the message via $b$. In the multilabel setting, it is considered that new information can reach node $a$ and be transmitted to $b$ multiple times, at the moments determined by the time-label set associated with the arc connecting both nodes. Therefore, the arc connecting node $a$ with node $c$ should not allow the information to be transmitted at an earlier time to the one of $b$ sending the information to $c$. We formalized this idea as follows. First, we introduce the operator that will allow us to compare two time-label sets in the multilabel scenario.

\begin{definition}
Let $S_1,S_2$ be two sets in $2^{\mathds{N}}$, $S_1 \preceq S_2$ (resp. $S_1 \prec S_2$) if there exist $x \in S_1$ and $y  \in S_2$ such that $x \leq y$ (resp. $x < y$).
\end{definition}

Let us denote by  $\hspace{0.5ex} \cancel{\preceq} \hspace{0.5ex}$ (resp. $\cancel{\prec}$) the negation. Notice that $S_1 \hspace{0.5ex} \cancel{\preceq} \hspace{0.5ex} S_2$ (resp. $S_1 \hspace{0.5ex} \cancel{\prec} \hspace{0.5ex} S_2$) implies that all elements in $S_1$ are strictly greater (resp. greater or equal) than all elements in $S_2$.

\begin{definition}
Let $\mathcal{G}=(G,\lambda)$ be a multilabel temporal graph and $O$ an orientation of $\mathcal{G}$. We will say $O$ is a \bfit{multilabel temporal transitive orientation} (MTTO) if whenever $(a,b),(b,c) \in O$ with $\lambda(ab) \preceq \lambda(bc)$, then necessarily $(a,c) \in O$ with $\lambda(ac) \hspace{0.5ex} \cancel{\prec} \hspace{0.5ex} \lambda(bc)$.
\end{definition}

In the multilabel scenario, the equivalent to monolabel triangles considered in the previous subsections will be monolabel triangles with singleton time-label sets. Then, we can define \bfit{almost-multilabel temporal transitive orientations} (AMTTO) as the analogous to the simple temporal case. This leads to an equivalent characterization of comparability multilabel temporal graphs and lexicographic strategy for constructing a MTTO of a YES instance.

\begin{theorem}
 Let $\mathcal{G}$ be a multilabel temporal graph. Then, the following assertions are equivalent:
\begin{enumerate}[label=\roman*)]
     \item $\mathcal{G}$ admits a MTTO
     \item $\mathcal{G}$ admits an AMTTO
     \item $Ness(\mathcal{G})$ induces an acyclic digraph
\end{enumerate}

Furthermore, there is a $O(nm + \min\{kn,m^2\})$-time algorithm to decide if $\mathcal{G}$ is a comparability multilabel temporal graph and construct a MTTO if the answer is YES, with $k$ being the number of monolabel triangles with a singleton time-label set associated to their edges.
\label{theo:mtto-gouila-houri-generalization}
\end{theorem}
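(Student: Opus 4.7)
The plan is to port the entire architecture of the proof of Theorem~\ref{theo:gouila-houri-generalization} to the multilabel setting, replacing every scalar label comparison by its multilabel counterpart: $\leq$ becomes $\preceq$ when the constraint is of "existence" type (there must be a label of $ab$ small enough relative to some label of $bc$), and $\leq$ becomes $\cancel{\prec}$ (i.e.\ every label of the first set is $\geq$ every label of the second) when the constraint is of "universal" type (the transitive edge must carry a label that is not earlier than the triggering one). Under this translation, the definitions of $Imp(\mathcal{G})$, $Aug(\mathcal{G})$ and $Ness(\mathcal{G})$ extend verbatim; the correlated monolabel triangle is defined as before but with the distinguished triangle carrying a common singleton label $\{t\}$, and AMTTO is defined as $O\cap O^{R}=\emptyset$ together with $O^{+}=O$.

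The core of the work is to re-prove the Triangle Lemma (Lemma~\ref{lem:triangle}) in the multilabel setting; once that is in place, its iterated version (Lemma~\ref{lem:iterative-triangle}) and the acyclicity of $Ness(\mathcal{G})$ in an AMTTO (Lemma~\ref{lem:ness-acyclic}) follow by the same arguments. I would treat the four cases $(i)$--$(iv)$ of Lemma~\ref{lem:triangle} in order, simply rewriting each scalar inequality $\lambda(xy)\leq \lambda(yz)$ as the appropriate $\preceq$/$\cancel{\prec}$ statement. The key observation making the cascade of implications go through is that universal bounds imply existential ones: whenever the MTTO condition forces an edge $ac$ with $\lambda(ac)\,\cancel{\prec}\,\lambda(bc)$, any singleton label $\{t\}$ of the triangle $uvw$ automatically lies below every label of $\lambda(ac)$, so the next implication, which only requires an existential comparison $\preceq$ (or its negation) between label sets, is triggered exactly as in the monolabel proof. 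The definition of a correlated monolabel triangle around the pivot $\{t\}$ is what guarantees that this flow of information between existential and universal comparisons remains sound.

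With the multilabel Triangle Lemma available, the implications $i)\Rightarrow ii)\Rightarrow iii)$ are immediate (the first because every MTTO is an AMTTO, the second by the same minimal directed cycle argument as in Lemma~\ref{lem:ness-acyclic}: a minimum cycle in $Ness(\mathcal{G})\subseteq O$ must be a monolabel directed triangle with singleton label, and Lemma~\ref{lem:iterative-triangle} applied to the forcing $(y,x)\to(x,y)$ contradicts its existence inside an AMTTO). For $iii)\Rightarrow i)$, I would replay the lexicographic construction verbatim: pick a linear order $\sigma$ extending the DAG $Ness(\mathcal{G})$, initialise $O=Ness(\mathcal{G})$, and greedily add $\{(a,b)\}^{+}$ for the lex-minimum unoriented edge $ab$. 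The same two verifications go through unchanged: $O\cup\{(a,b)\}^{+}$ remains an AMTTO (because any conflict $(v,u)\in O$ with $(u,v)\in\{(a,b)\}^{+}$ would propagate backwards to force $(b,a)\in Ness(\mathcal{G})$, contradicting that $ab$ is unoriented), and no directed triangle can appear at the end (the lex-minimum such triangle $uvw$ would, by the iterated Triangle Lemma, produce a lex-smaller directed triangle $abw$). This yields an MTTO and therefore $T$-comparability.

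For the algorithmic claim, the structure of the algorithm is identical to Subsection~\ref{subsect:algorithmic-aspects}: build $Imp(\mathcal{G})$ in $O(nm)$, augment it using either the $O(kn)$ enumeration over singleton-labelled monolabel triangles (now $k$ counts those triangles whose three edges share a common singleton label) or the $O(m^2)$ enumeration of pairs of edges $ac,bd$, compute $Ness(\mathcal{G})$ via Tarjan's SCC algorithm, extract $\sigma$ by topological sort, and run the greedy lexicographic orientation; each label-set comparison is assumed primitive, so the complexity bound $\max\{O(nm),\min\{O(kn),O(m^2)\}\}$ is preserved. The main obstacle I anticipate is not conceptual but notational: one must be disciplined in tracking which label comparisons are existential ($\preceq$) and which are universal ($\cancel{\prec}$) at every step of the Triangle Lemma's case analysis, since careless use of $\preceq$ in place of $\cancel{\prec}$ (or vice versa) would break the implication chains that make the forcings in $Aug(\mathcal{G})$ correct.
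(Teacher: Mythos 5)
Your proposal matches the paper's own treatment: the paper proves the multilabel theorem exactly by porting the machinery of Theorem \ref{theo:gouila-houri-generalization} (multilabel $Imp$, $Aug$, $Ness$, correlated monolabel triangles with singleton label sets, the Triangle Lemma and the lexicographic greedy completion), explicitly omitting the details as analogous to the simple case, and it makes the same observation you do that only the extremal elements of the label sets (equivalently, the $\preceq$/$\cancel{\prec}$ comparisons) matter, so the complexity bound with $k$ counting singleton-labelled monolabel triangles is unchanged. Your plan is correct and takes essentially the same route, with your caution about distinguishing existential from universal comparisons being precisely the bookkeeping the paper's analogy relies on.
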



\section{Characterizations by forbidden temporal ordered patterns}
\label{sect:patterns}

All proofs of results in this section are included in the Appendix.

\vspace{1ex}
\textbf{Temporal ordered patterns.} In graph theory, a property is  \bfit{hereditary} if it is stable under the induced subgraph relation. As such, hereditary properties can be characterized by a, possibly infinite, set of forbidden induced subgraphs. For some hereditary properties, interesting insights can be obtained by proving the existence of orderings of the graph's vertices avoiding induced ordered substructures, usually called patterns. In this section we extend these considerations to the temporal setting and prove such characterizations for the classes related to TTO. An \bfit{ordered temporal graph} is a temporal graph equipped with an ordering of its vertices. Two ordered temporal graphs are \bfit{isomorphic} if there is an isomorphism between them that preserves the vertex orderings. We say that an ordered temporal graph \bfit{contains} another one if by removing vertices from the first graph, one obtains an ordered temporal graph isomorphic to the second graph. A \bfit{temporal pattern} is a compact way of representing a family of ordered temporal graphs, given as an ordered temporal \bfit{trigraph}: between every pair of vertices there is either a non edge, a full edge or a dashed edge. In the latter two cases, the time-label is given as a variable, and the pattern will be defined using some inequalities between those variables. An example is shown on Figure \ref{fig:orderedpatterns-tto}. A \bfit{realization} of a temporal pattern, is an ordered temporal graph obtained from the pattern by choosing for each dashed edge either a non edge or an edge, and by assigning values to the edge variables that satisfy the contraints given by the pattern. Given a collection of patterns $\mathcal{F}$, we define the class $\mathcal{C_{\mathcal{F}}}$ as the set of connected temporal graphs admitting an ordering of their vertices that do not contain (in the ordered temporal graph sense) any realization of a pattern in $\mathcal F$. We say that the ordering of the temporal graph  \bfit{avoids} all patterns in $\mathcal F$. 

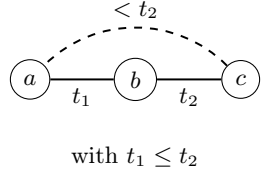
\begin{figure}[H]
\begin{center}
\begin{tikzpicture}[scale=0.55]
    \node (a) at (0, 0) [circle, draw, inner sep=2pt, fill=black] {};
    \node (b) at (2, 0) [circle, draw, inner sep=2pt, fill=black] {};
    \node (c) at (4, 0) [circle, draw, inner sep=2pt, fill=black] {};
    \draw[-, line width=0.75pt] (a) -- (b) node[midway, below] {$t_1$};
    \draw[-, line width=0.75pt] (b) -- (c) node[midway, below] {$t_2$};
    \draw[-, line width=0.75pt, bend right=45, dashed] (c) to node[midway, above] {$t_3$} (a);
\end{tikzpicture}

\vspace{1ex}
with $t_{1} \leq t_{2}$ and $t_3< t_{2}$
\end{center}
\caption{$TTO$ temporal ordered pattern.}
\label{fig:orderedpatterns-tto}
\end{figure}

Consider the temporal ordered pattern $TTO$ from Figure \ref{fig:orderedpatterns-tto} and the set of temporal ordered patterns $STTO$ illustrated in Figure \ref{fig:strict-tto-patterns} and let us propose the following characterizations. Observe that when a temporal graph is monolabel, the pattern $TTO$ corresponds to the usual pattern for characterizing the class of comparability graphs, see \cite{FeuilloleyH21} for a survey on three-vertex patterns.

\begin{theorem}\label{theo:tto-pattern-characterization}
    A temporal graph $\mathcal{G}$ admits a temporal transitive orientation if and only if it
belongs to $\mathcal{C}_{TTO}$.
\end{theorem}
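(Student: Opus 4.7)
The plan is to prove both implications by translating the orientation directly into an order on vertices and back, using the pattern condition as the exact bridge.

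For the forward direction, I would start from a TTO $O$ of $\mathcal{G}$. By Lemma \ref{lem:TTOacyclic}, $O$ is acyclic, so there exists a linear order $\sigma$ of $V(\mathcal{G})$ that is a topological sort of $O$ (that is, $(u,v)\in O$ implies $u\prec_{\sigma}v$). I would then argue that $\sigma$ avoids the pattern $TTO$: if three vertices $a\prec_{\sigma}b\prec_{\sigma}c$ realized the pattern, then $ab$ and $bc$ would be edges with $\lambda(ab)=t_1\leq t_2=\lambda(bc)$, both oriented forward in $O$ (because $\sigma$ is a topological order and $O$ is an orientation of all edges). The TTO property then forces the existence of an edge $ac$ with $\lambda(ac)\geq t_2$, which contradicts the pattern condition on the dashed edge (no edge, or label strictly less than $t_2$).

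For the backward direction, given an order $\sigma$ avoiding the pattern, I would define $O$ by orienting every edge $uv$ from the smaller to the larger endpoint according to $\sigma$. To verify temporal transitivity, suppose $(a,b),(b,c)\in O$ with $\lambda(ab)\leq\lambda(bc)$. Then $a\prec_{\sigma}b\prec_{\sigma}c$, and the hypotheses on the two edges match the pattern exactly. Since $\sigma$ avoids the pattern, the dashed-edge condition must fail, meaning that the edge $ac$ exists and $\lambda(ac)\geq\lambda(bc)$. Because $a\prec_{\sigma}c$, the edge $ac$ is oriented $(a,c)$ in $O$, giving the required transitive arc with the correct label. This is precisely the TTO condition, so $O$ is a TTO of $\mathcal{G}$.

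The main subtlety is just to read the pattern correctly: the dashed edge encodes the disjunction \textit{either there is no edge $ac$, or $\lambda(ac)<t_2$}, so pattern avoidance is equivalent to requiring the edge $ac$ to exist with $\lambda(ac)\geq t_2$. Once this is pinned down, both directions become immediate and no further machinery (such as $\mathrm{Ness}$, $\mathrm{Aug}$, or the Triangle Lemma) is needed; the pattern was chosen precisely to mirror the TTO condition on a directed 2-path, and topological sorting turns the equivalence into a bookkeeping exercise. The only minor point to mention explicitly is that the pattern only constrains triples where $\lambda(ab)\leq\lambda(bc)$, so the case $\lambda(ab)>\lambda(bc)$ of the TTO definition imposes no constraint, which is consistent with the pattern carrying the restriction $t_1\leq t_2$.
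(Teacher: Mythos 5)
Your proposal is correct and follows essentially the same route as the paper's proof: use Lemma \ref{lem:TTOacyclic} to get acyclicity, take a linear extension (topological sort) of the TTO for the forward direction, and orient left-to-right along a pattern-avoiding order for the converse, with the dashed edge encoding exactly the negation of the transitive-arc requirement. No gap; nothing further is needed.
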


Observe that when a temporal graph is monolabel, the pattern corresponds to the usual pattern for characterizing the class of comparability graphs, see \cite{FeuilloleyH21} for a survey of three-vertex patterns. This proves that the characterization by forbidden temporal ordered patterns of the comparability temporal graphs works as an extension of the static case.

\begin{figure}[ht]
\begin{center}
\begin{tikzpicture}[scale=0.55]
    \node (a) at (0, 0) [circle, draw, inner sep=2pt, fill=black] {};
    \node (b) at (2, 0) [circle, draw, inner sep=2pt, fill=black] {};
    \node (c) at (4, 0) [circle, draw, inner sep=2pt, fill=black] {};
    \node (d) at (6, 0) [circle, draw, inner sep=2pt, fill=black] {};
    \draw[-, line width=0.75pt] (a) -- (b) node[midway, below] {$t_1$};
    \draw[-, line width=0.75pt, bend left=45] (b) to node[midway, below right] {$t_2$} (d);
    \draw[-, line width=0.75pt, bend left=45, dashed] (a) to node[midway, above] {$< t_2$} (d);
    \draw[-, line width=0.75pt] (b) to node[midway, below] {$t_3$} (c);
    \node[text width=3cm] (I) at (3.5,-2.5) {(1) with $t_{1} < t_{2}$ and $t_2 \neq t_3$};
\end{tikzpicture} \hspace{4ex}
\begin{tikzpicture}[scale=0.55]
    \node (a) at (0, 0) [circle, draw, inner sep=2pt, fill=black] {};
    \node (b) at (2, 0) [circle, draw, inner sep=2pt, fill=black] {};
    \node (c) at (4, 0) [circle, draw, inner sep=2pt, fill=black] {};
    \node (d) at (6, 0) [circle, draw, inner sep=2pt, fill=black] {};
    \draw[-, line width=0.75pt] (a) -- (b) node[midway, below] {$t_1$};
    \draw[-, line width=0.75pt, bend left=45] (b) to node[midway, below right] {$t_2$} (d);
    \draw[-, line width=0.75pt,bend left=45, dashed] (a) to node[midway, above] {$< t_2$} (d);
    \draw[-, line width=0.75pt] (c) to node[midway, below] {$t_3$} (d);
    \node[text width=3cm] (I) at (3.5,-2.5) {(2) with $t_{1} < t_{2}$ and $t_2 \neq t_3$};
\end{tikzpicture} \hspace{4ex}
\begin{tikzpicture}[scale=0.55]
    \node (a) at (0, 0) [circle, draw, inner sep=2pt, fill=black] {};
    \node (b) at (2, 0) [circle, draw, inner sep=2pt, fill=black] {};
    \node (c) at (4, 0) [circle, draw, inner sep=2pt, fill=black] {};
    \node (d) at (6, 0) [circle, draw, inner sep=2pt, fill=black] {};
    \draw[-, line width=0.75pt] (a) -- (b) node[midway, below] {$t_1$};
    \draw[-, line width=0.75pt] (b) -- (c) node[midway, below] {$t_2$};
    \draw[-, line width=0.75pt,dashed] (c) to node[midway, below] {$< t_3$} (d);
    \draw[-, line width=0.75pt,bend right=45, dashed] (c) to node[midway, above] {$< t_2$} (a);
    \draw[-, line width=0.75pt, bend left=45] (b) to node[midway, above] {$t_3$} (d);
    \node[text width=3cm] (I) at (3.5,-2.5) {(3) with $t_{1} < t_{3}$ and $t_2 < t_3$};
\end{tikzpicture}
\vspace{2ex}
\end{center}

\begin{center}
\begin{tikzpicture}[scale=0.55]
    \node (a) at (0, 0) [circle, draw, inner sep=2pt, fill=black] {};
    \node (b) at (2, 0) [circle, draw, inner sep=2pt, fill=black] {};
    \node (c) at (4, 0) [circle, draw, inner sep=2pt, fill=black] {};
    \node (d) at (0, -1.5) [circle, draw, inner sep=2pt, fill=black] {};
    \draw[-, line width=0.75pt] (b) -- (c) node[midway, below] {$t_3$};
    \draw[-, line width=0.75pt, dashed] (a) to node[midway, above] {$< t_3$} (b);
    \draw[-, line width=0.75pt, bend right=65] (c) to node[midway, above] {$t_2$} (a);
    \draw[-, line width=0.75pt, bend right=45] (d) to node[midway, above left] {$t_1$} (b);
    \draw[-, line width=0.75pt, bend right=45, dashed] (d) to node[midway, below] {$< t_3$} (c);
    \node[text width=5cm] (I) at (3,-3.5) {(4) with $t_{1} < t_{2}$ and $t_{2} < t_{3}$};
\end{tikzpicture} \hspace{4ex}
\begin{tikzpicture}[scale=0.55]
    \node (a) at (0, 0) [circle, draw, inner sep=2pt, fill=black] {};
    \node (b) at (2, 0) [circle, draw, inner sep=2pt, fill=black] {};
    \node (c) at (4, 0) [circle, draw, inner sep=2pt, fill=black] {};
    \node (d) at (6, 0) [circle, draw, inner sep=2pt, fill=black] {};
    \node (e) at (0, -1.5) [circle, draw, inner sep=2pt, fill=black] {};
    \draw[-, line width=0.75pt] (a) -- (b) node[midway, below] {$t_2$};
    \draw[-, line width=0.75pt] (b) -- (c) node[midway, below] {$t_4$};
    \draw[-, line width=0.75pt] (c) -- (d) node[midway, below] {$t_3$};
    \draw[-, line width=0.75pt,bend right=45, dashed] (c) to node[midway, above] {$< t_4$} (a);
    \draw[-, line width=0.75pt, bend left=45] (b) to node[midway, above] {$t_5$} (d);
    \draw[-, line width=0.75pt, bend right=45] (e) to node[midway, above] {$t_1$} (c);
    \draw[-, line width=0.75pt,bend right=45, dashed] (e) to node[midway, below] {$ < t_3$} (d);
    \node[text width=5cm] (I) at (3.5,-4.5) {(5) with $t_{1} < t_{3}$, $t_{2} < t_{4}$ and $t_3,t_4,t_5$ not all equal};
\end{tikzpicture}
\end{center}

\caption{\bfit{STTO} temporal ordered patterns.}
\label{fig:strict-tto-patterns}
\end{figure}

\begin{theorem}
    A temporal graph $\mathcal{G}$ admits a strict temporal transitive orientation if and only if it belongs to $\mathcal{C}_{STTO}$.
    \label{theo:strict-tto-pattern-characterization}
\end{theorem}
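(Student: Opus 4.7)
The proof will follow the template of Theorem \ref{theo:tto-pattern-characterization}, establishing a bidirectional correspondence: orderings of $V(\mathcal{G})$ avoiding the five patterns of Figure \ref{fig:strict-tto-patterns} are in correspondence with Strict TTOs obtained by orienting every edge forward in $\sigma$. The case analysis is substantially heavier than in the TTO case because a Strict TTO may contain monolabel directed cycles (strictness is vacuous along arcs of equal time-label), because the strict hypothesis breaks the symmetric role of $\lambda(ab)$ and $\lambda(bc)$, and because the patterns involve up to five vertices and conditional edges.

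For the direction from Strict TTO to $\sigma$, I would start from a Strict TTO $O$ of $\mathcal{G}$ and construct $\sigma$ as a linear extension of the partial order arising from $O$'s acyclic skeleton. The key structural observation is that any strongly connected component of $O$ can only contain arcs of a single common time-label, so reorientation inside such a component preserves the Strict TTO property; this lets us pick $\sigma$ so that orienting each edge forward with respect to $\sigma$ still yields a Strict TTO. I would then verify by contraposition that each of the five forbidden patterns, were it to occur in $(\mathcal{G},\sigma)$, would force a violation of the Strict TTO axiom by exhibiting a concrete strict time-respecting directed path whose prescribed transitive arc is either missing, misoriented, or carries too small a label.

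For the converse direction, starting from $\sigma$ avoiding all five patterns, I would define $O$ by orienting each edge $uv$ from $u$ to $v$ when $u \prec_\sigma v$, and verify that $O$ is Strict TTO by contradiction. Suppose $(a,b),(b,c) \in O$ with $\lambda(ab) < \lambda(bc)$ and the transitivity axiom fails; then $a \prec_\sigma b \prec_\sigma c$ and the failure falls into one of three mutually exclusive cases (the edge $ac$ is absent, or it exists with $\lambda(ac) < \lambda(bc)$; the backwards-oriented case is vacuously excluded by the construction). In each case, I would exhibit the witnessing pattern by locating one or two additional vertices in $\mathcal{G}$ whose neighbourhood relations to $\{a,b,c\}$ reconstruct the full forbidden configuration: patterns~(1)–(3) cover direct four-vertex violations, while patterns~(4)–(5) are needed when the orientation of $ab$ or $bc$ is itself the product of a propagated forcing requiring auxiliary vertices $d$ or $e$.

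The main obstacle will be verifying that the list of five patterns is both \emph{sound} (each really is forbidden in any Strict TTO-ordered temporal graph) and \emph{complete} (every possible violation of the Strict TTO axiom witnessed by some $\sigma$ contains at least one pattern). Completeness requires a delicate enumeration by the relative order of the time-labels $\lambda(ab), \lambda(bc), \lambda(ac)$ and by the presence, orientation, and label of auxiliary edges, which is precisely what distinguishes patterns~(1) and (2) from each other and motivates the five-vertex pattern~(5). Handling monolabel cycles carefully in the forward direction, so that the ordering $\sigma$ produced from $O$ does not accidentally introduce a pattern at the level of SCC-internal arcs, is the other subtle point that I anticipate needing the most care.
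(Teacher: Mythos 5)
There is a genuine gap in your converse direction. You propose to define $O$ by orienting every edge forward with respect to $\sigma$ and then to show that any failure of the strict transitivity axiom exhibits a forbidden pattern. This cannot work, because the five patterns all involve at least four vertices: the bare three-vertex configuration $a \prec_\sigma b \prec_\sigma c$ with $\lambda(ab) < \lambda(bc)$ and $ac \notin E$ is \emph{not} forbidden, yet forward orientation turns it into the strict time-respecting path $(a,b),(b,c)$ with no transitive arc. Concretely, a $P_3$ with labels $\lambda(ab)=1$, $\lambda(bc)=2$ and $ac\notin E$ admits a Strict TTO, and the ordering $a\prec b\prec c$ vacuously avoids all five patterns, but its forward orientation violates the axiom. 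The paper's construction is different and this difference is the heart of the proof: whenever $a\prec_\sigma b\prec_\sigma c$ with $\lambda(ab)<\lambda(bc)$ and $ac\notin E$ or $\lambda(ac)<\lambda(bc)$, the edge $bc$ is deliberately oriented \emph{backwards} as $(c,b)$, and only the remaining edges are oriented forward. The role of patterns (1)--(5) is precisely to certify that these forced backward orientations are consistent and do not themselves create new violations; the paper's six-case analysis checks exactly this. Without the backward-orientation rule, your case analysis has nothing to latch onto for the base violation.

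Your forward direction also drifts from what is needed. You plan to choose $\sigma$ so that forward orientation with respect to $\sigma$ is itself a Strict TTO; by the example above such a $\sigma$ need not exist in the form you want (and, inside a nontrivial strongly connected component of $O$, no linear order can make all arcs forward). The paper instead takes any linear extension of the condensation of $O$ in which each strongly connected component is consecutive, and shows directly --- using that SCCs of a Strict TTO are monolabel, together with two observations about arcs entering and leaving an SCC --- that this ordering avoids the five patterns. It never claims the ordering reorients $\mathcal{G}$ into a Strict TTO by forward orientation. Your instinct that SCCs are monolabel is correct and is indeed the key structural fact (the paper's Observation on strongly connected components), but the correspondence between orderings and orientations in this theorem is not the naive forward one used for the non-strict TTO characterization, and your proposal as written would not close either direction.
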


Recall that deciding whether a temporal graph is a comparability temporal graph can be solved in polynomial time whereas the recognition problem for graphs admitting a Strict TTO was proved NP-hard. We can see how this difference in the complexity of both problems also translates into a complexity gap when proposing characterizations by temporal ordered patterns. 

Analogous characterizations can be formulated for the Strong TTO and Strong Strict TTO versions.

\section{Perspectives}
\label{sect:conclusion}

In section \ref{sect:tto-algorithm}, to prove our main result, Theorem \ref{theo:gouila-houri-generalization}, we used the lexicographic argument proposed in \cite{hell1995lexicographic}. The question of whether it is possible to directly prove that any graph admitting an ATTO is a comparability temporal graph by using a similar approach to Ghouila-Houri's original proof, i.e. by module contraction and re-orientation of an ATTO, remains unresolved. For this, the concept of module should be extended to the temporal setting. Then, a set of vertices $M$ having the same neighbors outside of $M$ and connected to them by exactly the same time-labels could be considered as a temporal module. What is more, in the static case all possible transitive orientations of a graph can be described in a compact way by the modular decomposition tree, see \cite{Golumbic2nd}. In the temporal setting, recall that our algorithm  constructs a temporal transitive orientation of a temporal graph $\mathcal{G}$ by computing $Ness(\mathcal{G})$, choosing any linear extension that puts all necessary arcs forward and greedily completing the orientation in a lexicographic way. We observe that any TTO $O$ of $\mathcal{G}$ can in fact be obtained by this algorithm by choosing the appropriate linear extension of $Ness(\mathcal{G})$: it suffices to start with the linear extension that puts all arcs of $O$ forward. However, it remains unanswered whether a compact representation of all TTO can be proposed, as for the static case. It is in this context that we raise the following open problem.

\begin{PB}
\label{op:tto-temporal-modular-decomposition}
For a given comparability temporal graph $\mathcal{G}=(G,\lambda)$, is it possible to obtain a TTO by a strategy based on temporal modular decomposition? Can this decomposition provide a compact structure to describe all possible TTO of $\mathcal{G}$?
\end{PB}

In Section \ref{subsect:preliminar-properties} we showed that every static graph $G$ admits a transitive temporalization, i.e. the assignation of time-labels to its edges, resulting in a comparability temporal graph. This observation leads to the following interesting relabelling  problem.

\begin{PB}
\label{op:tto-editing-problem}
For a given temporal graph $\mathcal{G}$, how to compute the minimum number of edges for which the time-label needs to be modified in order to obtain a comparability temporal graph?
\end{PB}

Observe that the problem of assigning time-labels to a directed unlabelled graph in order to obtain a comparability temporal graph, or conclude it is not possible, can be easily solved. It is only necessary to verify the directed graph is acyclic as then one can always assign time-labels in a way that all directed paths are strictly decreasing, as shown in Subsection \ref{subsect:preliminar-properties}. Finally, can we modify the model to also consider \textit{delays}, i.e. the time it takes for a resource to traverse an edge, or to study \textit{$\Delta$-restless paths}, i.e. paths where the waiting time at a node cannot exceed a given value $\Delta$? This type of additional constraint was analyzed in \cite{cauvi2025parameterized}. As an application  of these relabelling problems, one can consider a transportation network modeled by a temporal graph $\mathcal{G}$ and evaluate how to transform the transportation time tables for the transportation network to satisfy temporal transitivity.

\bibliographystyle{plain}
\small{
    \bibliography{references}

@inproceedings{hell2014ordering,
  title={Ordering without forbidden patterns},
  author={Hell, Pavol and Mohar, Bojan and Rafiey, Arash},
  booktitle={European Symposium on Algorithms},
  pages={554--565},
  year={2014},
  organization={Springer}
}

@article{FeuilloleyH21,
  author    = {Laurent Feuilloley and
               Michel Habib},
  title     = {Graph classes and forbidden patterns on three vertices},
  journal   = {SIAM Journal on Discrete Mathematics (SIDMA)},
  volume    = {35},
  number 	= {1},
  pages 	= {55-90},
  year      = {2021},
  doi 		= {10.1137/19M1280399},
}

@book{Golumbic2nd,
author = {Golumbic, Martin Charles},
title = {Algorithmic Graph Theory and Perfect Graphs (Annals of Discrete Mathematics, Vol 57)},
year = {2004},
isbn = {0444515305},
publisher = {North-Holland Publishing Co.},
address = {NLD}
}

@article{casteigts2024simple,
  title={Simple, strict, proper, happy: A study of reachability in temporal graphs},
  author={Casteigts, Arnaud and Corsini, Timoth{\'e}e and Sarkar, Writika},
  journal={Theoretical Computer Science},
  volume={991},
  pages={114434},
  year={2024},
  publisher={Elsevier}
}

@article{Tarjan72,
  author       = {Robert Endre Tarjan},
  title        = {Depth-First Search and Linear Graph Algorithms},
  journal      = {{SIAM} J. Comput.},
  volume       = {1},
  number       = {2},
  pages        = {146--160},
  year         = {1972},
  url          = {https://doi.org/10.1137/0201010},
  doi          = {10.1137/0201010},
  timestamp    = {Wed, 14 Nov 2018 10:45:08 +0100},
 
}

@article{GhouilaHouri,
  title={Caractérization des graphes non orientés dont on peut orienter
les arrêtes de manière à obtenir le graphe d'une relation d'ordre},
  author={Ghouila-Houri, Alain},
  journal={Comptes Rendus de
l’Académie des Sciences Paris},
  volume={254},
  pages={1370–1371},
  year={1962}
}

@article{hell1995lexicographic,
  title={Lexicographic orientation and representation algorithms for comparability graphs, proper circular arc graphs, and proper interval graphs},
  author={Hell, Pavol and Huang, Jing},
  journal={Journal of Graph Theory},
  volume={20},
  number={3},
  pages={361--374},
  year={1995},
  publisher={Wiley Online Library}
}

@article{csikos2025forbidden,
  title={Forbidden patterns in temporal graphs resulting from encounters in a corridor},
  author={Csik{\'o}s, M{\'o}nika and Habib, Michel and Nguyen, Minh-Hang and Rabie, Mika{\"e}l and Viennot, Laurent},
  journal={Journal of Computer and System Sciences},
  volume={150},
  pages={103620},
  year={2025},
  publisher={Elsevier}
}

@incollection{damaschke1990forbidden,
  title={Forbidden ordered subgraphs},
  author={Damaschke, Peter},
  booktitle={Topics in Combinatorics and Graph Theory: Essays in Honour of Gerhard Ringel},
  pages={219--229},
  year={1990},
  publisher={Springer}
}

@article{mertzios2025complexity,
  title={The complexity of transitively orienting temporal graphs},
  author={Mertzios, George B and Molter, Hendrik and Renken, Malte and Spirakis, Paul G and Zschoche, Philipp},
  journal={Journal of Computer and System Sciences},
  pages={103630},
  year={2025},
  publisher={Elsevier}
}

@article{casteigts2012time,
  title={Time-varying graphs and dynamic networks},
  author={Casteigts, Arnaud and Flocchini, Paola and Quattrociocchi, Walter and Santoro, Nicola},
  journal={International Journal of Parallel, Emergent and Distributed Systems},
  volume={27},
  number={5},
  pages={387--408},
  year={2012},
  publisher={Taylor \& Francis}
}

@inproceedings{kempe2000connectivity,
  title={Connectivity and inference problems for temporal networks},
  author={Kempe, David and Kleinberg, Jon and Kumar, Amit},
  booktitle={Proceedings of the thirty-second annual ACM symposium on Theory of computing},
  pages={504--513},
  year={2000}
}

@article{latapy2018stream,
  title={Stream graphs and link streams for the modeling of interactions over time},
  author={Latapy, Matthieu and Viard, Tiphaine and Magnien, Cl{\'e}mence},
  journal={Social Network Analysis and Mining},
  volume={8},
  number={1},
  pages={61},
  year={2018},
  publisher={Springer}
}

@article{moskovitch2015fast,
  title={Fast time intervals mining using the transitivity of temporal relations},
  author={Moskovitch, Robert and Shahar, Yuval},
  journal={Knowledge and Information Systems},
  volume={42},
  number={1},
  pages={21--48},
  year={2015},
  publisher={Springer}
}

@article{tannier2011evaluating,
  title={Evaluating temporal graphs built from texts via transitive 598 reduction},
  author={Tannier, Xavier and Muller, Philippe},
  journal={Journal of Artificial Intelligence Research (JAIR)},
  volume={40},
  number={375{\H{U}}413},
  pages={599},
  year={2011}
}

@article{spinrad1985comparability,
  title={On comparability and permutation graphs},
  author={Spinrad, Jeremy},
  journal={SIAM Journal on Computing},
  volume={14},
  number={3},
  pages={658--670},
  year={1985},
  publisher={SIAM}
}

@article{mcconnell1999modular,
  title={Modular decomposition and transitive orientation},
  author={McConnell, Ross M and Spinrad, Jeremy P},
  journal={Discrete Mathematics},
  volume={201},
  number={1-3},
  pages={189--241},
  year={1999},
  publisher={Elsevier}
}

@article{aspvall1979linear,
  title={A linear-time algorithm for testing the truth of certain quantified boolean formulas},
  author={Aspvall, Bengt and Plass, Michael F and Tarjan, Robert Endre},
  journal={Information processing letters},
  volume={8},
  number={3},
  pages={121--123},
  year={1979},
  publisher={Elsevier}
}

@inproceedings{cauvi2025parameterized,
  title={Parameterized Restless Temporal Path},
  author={Cauvi, Justine and Viennot, Laurent},
  booktitle={International Symposium on Fundamentals of Computation Theory},
  pages={82--93},
  year={2025},
  organization={Springer}
}

@book{Diestel12,
  author       = {Reinhard Diestel},
  title        = {Graph Theory, 4th Edition},
  series       = {Graduate texts in mathematics},
  volume       = {173},
  publisher    = {Springer},
  year         = {2012},
  isbn         = {978-3-642-14278-9},
  timestamp    = {Mon, 04 Mar 2013 20:55:42 +0100},
  
}
}
\newpage

\section{Appendix}

We propose a first translation to Ghouila-Houri's article \cite{GhouilaHouri}, where the results cited and extended in our work are presented. The original name of the article is \textit{Caractérisation des graphes non orientés dont
on peut orienter les arêtes de manière à obtenir le graphe d'une
relation d'ordre}.

\subsection{Ghouila-Houri - Characterization of undirected graphs that can be oriented as a partial order}

Graphs of order relations have interesting properties that do not depend on the orientation of edges (chromatic number, stability, etc.). It is therefore useful to recognize such graphs without knowing the orientation of their edges. C. Berge and A. J. Hoffman conjectured a characterization, which is proved here.

\vspace{2ex}

Let $G = (X, U)$ be an undirected graph. The elements of $U$, called edges of $G$, are 2-element subsets of the vertex set $X$. To orient $G$ means choosing, for each edge $\{a,b\}$, exactly one of the ordered pairs $(a,b)$ or $(b,a)$. Let $V$ be the set of chosen ordered pairs. We say that the orientation $V$ is \emph{transitive} if:

\[
(a,b) \in V \text{ and } (b,c) \in V \Rightarrow (a,c) \in V.
\]

$V$ is \emph{quasi-transitive} if:

\[
(a,b) \in V \text{ and } (b,c) \in V \Rightarrow \{a,c\} \in U.
\]

Any transitive orientation is, in particular, quasi-transitive.

\begin{lemma}
If there exists a quasi-transitive orientation of $G$, then there also exists a transitive orientation of $G$.
\end{lemma}

\begin{proof}
We assume the lemma holds for $|X| < n$, and consider a graph $G = (X,U)$ with $|X| = n$ that admits a quasi-transitive orientation $V$. Observe that if three vertices $a,b,c$ satisfy $(a,b) \in V, (b,c) \in V, (c,a) \in V$, then any other vertex of $G$ is adjacent to 0, 2, or 3 of these vertices.

If $V$ is not transitive, then there exist three vertices $x_1, x_2, x_3$ such that $(x_1,x_2) \in V,(x_2,x_3) \in V,(x_3,x_1) \in V$. Two cases arise.

\paragraph{Case 1.}
Every vertex different from $x_1,x_2,x_3$ that is adjacent to one of them is adjacent to all three.

Remove $x_2$ and $x_3$, and orient the remaining graph transitively (this is possible by induction). Then orient each edge $\{x,x_2\}$ or $\{x,x_3\}$ (for $x \notin \{x_1,x_2,x_3\}$) in the same direction as $\{x,x_1\}$, and orient the triangle formed by $x_1,x_2,x_3$ transitively. This yields a transitive orientation of $G$.

\paragraph{Case 2.}
There exists at least one vertex different from $x_1,x_2,x_3$ adjacent to two of them but not the third; for example, adjacent to $x_2$ and $x_3$, but not to $x_1$. Let $A$ be the set of vertices different from $x_2$ and $x_3$ adjacent to both $x_2$ and $x_3$, let $W$ be the set of pairs of vertices in $A$ that are not edges of $G$. Let $C$ be the connected component containing $x_1$ in the graph $(A,W)$; then $|C| > 1$.

Every $c \in C$ satisfies $(c,x_2) \in V $ and $(x_3,c) \in V$. We show that any vertex not in $C$ that is adjacent to one vertex of $C$ is necessarily adjacent to all vertices of $C$. Indeed, let $x \notin C$ and $a,b \in C$. If $x$ is adjacent to $a$, then it must be adjacent to $x_2$ or $x_3$, hence $x$ also adjacent to $b$; otherwise we get $x \in A$ and $\{x,b\} \in W$, implying $x \in C$, a contradiction.

It is now sufficient to orient transitively the subgraph induced by $C$ first, then the graph obtained by removing all vertices of $C$ except $x_1$ and to orient every edge $\{x,x'\}$ with $x \notin C$ and $x' \in C$ in the same direction as $\{x,x_1\}$, to obtain a transitive orientation of $G$.
$\qed$\end{proof}

\begin{theorem}
Let $G = (X,U)$ be an undirected graph, and let $\overline{G}$ the undirected graph whose vertices are the ordered pairs $(a,b)$ such that $\{a,b\} \in U$ and by linking every vertex $(a,b)$ to vertex $(b,a)$ and to every vertex $(b,c)$ such that $\{a,c\} \notin U$. Then $G$ admits a transitive orientation if and only if $\overline{G}$ is bipartite (i.e., has no odd cycle).
\end{theorem}

\begin{proof}
\begin{enumerate}
\item If $V$ is a transitive orientation of $G$, then $V$ is an independent set in $\overline{G}$, and the complement of $V$ is also an independent set in $\overline{G}$. Hence $\overline{G}$ is bipartite.

\item Conversely, if $\overline{G}$ is 2-colorable, say red and black, then the set $V$ of red vertices defines a quasi-transitive orientation of $G$. By the previous lemma, $G$ then admits a transitive orientation.
\end{enumerate}
$\qed$\end{proof}

\noindent (*) Session of February 12, 1962.

\subsection{TTO characterization via temporal ordered patterns}
\label{appendix:tto-pattern-characterization}

Proof of Theorem \ref{theo:tto-pattern-characterization}: \textit{The temporal graph $\mathcal{G}$ admits a temporal transitive orientation if and only if it
belongs to $\mathcal{C}_{TTO}$.}

\begin{proof}
\vspace{2ex}

($\Rightarrow$) Let $O$ be a TTO of $\mathcal{G}=(G, \lambda)$. By Lemma \ref{lem:qtto-monolabel-triangle}, $O$ is acyclic and therefore defines a partial order $\sigma$ of $V$. Let $\sigma'$ be a linear extension of $\sigma$. Then, for every arc $(u,v) \in O$, $u \prec_{\sigma'} v$. Consider the ordered triplet $a,b,c$ and let $t_{1}$, $t_{2}$ and $t_{3}$ be the time-labels associated to the potential edges $ab$, $bc$ and $ac$ respectively. Note that the only possible arcs between the triplet's nodes are $(a,b), (b,c)$ and $(a,c)$. As the orientation was TTO, if arcs $(a,b), (b,c) \in O$ such that $t_1 \leq t_2$ then necessarily $(a,c) \in O$ such that $t_3 \geq t_2$. We conclude that the pattern is avoided by every ordered triplet $a,b,c$.

\vspace{2ex}
($\Leftarrow$) Let $\sigma'$ be an ordering avoiding the pattern. We know that for every triplet $a,b,c$ ordered by $\sigma'$, if $ab \in E$ and $bc \in E$ then either $\lambda(ab) > \lambda(bc)$ or $ac \in E$ such that $\lambda(ac) \geq \lambda(bc)$. Then, if we orient all edges from left to right according to $\sigma'$, we obtain a TTO of $\mathcal{G}$.

\end{proof}

\subsection{Strict TTO characterization via temporal ordered patterns}
\label{appendix:strict-tto-pattern-characterization}

Let $O$ be a Strict TTO of a temporal graph $\mathcal{G}=(G,\lambda)$, with $G=(V,E)$. Note that unlike temporal transitive orientations, $O$ can contain a directed cycle. However, directed cycles must be monolabel, and even more, we state the following.

\begin{obs}
All strongly connected components in $O$ are monolabel.
\label{obs:strongly-connected-comp-labels-strict} 
\end{obs}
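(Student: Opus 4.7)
The strategy is to reduce the observation to the claim that every directed cycle in $O$ is monolabel and then propagate this to the whole SCC via a cycle-gluing argument.

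I would first prove by induction on the length $k$ of a directed cycle $C=v_0v_1\cdots v_{k-1}v_0$ in $O$ that all arcs of $C$ carry one common time-label. If $C$ is not monolabel, then (since the labels around $C$ cannot strictly decrease forever without being constant) there exists an index $i$ with $\lambda(v_iv_{i+1})<\lambda(v_{i+1}v_{i+2})$ taken modulo $k$. Applying the Strict TTO property to the pair $(v_i,v_{i+1}),(v_{i+1},v_{i+2})$ yields a shortcut arc $(v_i,v_{i+2})\in O$ with $\lambda(v_iv_{i+2})\geq \lambda(v_{i+1}v_{i+2})$. In the base case $k=3$, this shortcut coincides, as an edge, with the cycle arc $(v_{i+2},v_i)$ oriented the other way, contradicting that $O$ is an orientation. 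For $k>3$, the shortcut produces a directed cycle $v_iv_{i+2}v_{i+3}\cdots v_{i-1}v_i$ of length $k-1$, which by induction is monolabel of some label $s$; hence $\lambda(v_iv_{i+2})=s$ and every arc of the original $C$ except $(v_i,v_{i+1})$ and $(v_{i+1},v_{i+2})$ carries label $s$. A case analysis then shows that the two remaining labels must both equal $s$: if $\lambda(v_{i+1}v_{i+2})<s$, the pair $(v_{i+1},v_{i+2}),(v_{i+2},v_{i+3})$ is a strict increase, producing a new chord that builds a length-$(k-1)$ cycle containing the outlier $\lambda(v_iv_{i+1})$, contradicting the inductive monolabel conclusion; and $\lambda(v_iv_{i+1})<s$ combined with $\lambda(v_{i-1}v_i)=s$ yields an analogous chord going the other way.

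Second, I would extend the cycle statement to SCCs. Given two arcs $e_1,e_2$ lying in the same SCC $S$ of $O$, the aim is to produce a simple directed cycle of $O[S]$ containing both. Using strong connectivity, pick directed paths $P$ from the head of $e_1$ to the tail of $e_2$ and $Q$ from the head of $e_2$ to the tail of $e_1$, both inside $S$; the closed walk $e_1\cdot P\cdot e_2\cdot Q$ contains both $e_1$ and $e_2$. By iteratively shortcutting any repeated vertex that is not an endpoint of $e_1$ or $e_2$, I can extract a simple directed cycle through both arcs. By Step 1 this cycle is monolabel, so $\lambda(e_1)=\lambda(e_2)$. Since $e_1,e_2$ were arbitrary, all arcs of $S$ share a common label.

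The main obstacle I anticipate is the inductive step of Step 1, specifically the situation in which after one shortcut only a single outlier label remains on the original cycle. Closing that case requires chasing the forcing one step further around $C$ — using the inequality $\lambda(v_iv_{i+2})\geq\lambda(v_{i+1}v_{i+2})$ together with a second Strict TTO application at an adjacent pair — to produce a second length-$(k-1)$ cycle whose monolabel requirement contradicts the original deviation. Making this chain of shortcuts robust, and ensuring that the resulting shorter cycles are genuine simple cycles rather than closed walks, is the technical crux.
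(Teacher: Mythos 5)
Your Step~1 breaks down exactly at the case you yourself flag as the crux, and that case cannot be closed, because the configuration it describes actually occurs in a Strict TTO. Take vertices $a,b,c,d$ with edges $ab,bc,cd,da,ac$ (no edge $bd$), labels $\lambda(ab)=1$ and $\lambda(bc)=\lambda(cd)=\lambda(da)=\lambda(ac)=2$, oriented as $(a,b),(b,c),(c,d),(d,a),(a,c)$. The only strictly increasing directed two-path is $a\to b\to c$, and its required shortcut $(a,c)$ with label $\geq 2$ is present, so this orientation satisfies the Strict TTO condition; yet the directed cycle $a\to b\to c\to d\to a$ carries labels $1,2,2,2$ and the whole vertex set is one strongly connected component with two distinct labels. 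This is precisely your leftover subcase: after the first shortcut the triangle $a\to c\to d\to a$ is monolabel with $s=2$, $\lambda(v_{i+1}v_{i+2})=s$, and the single outlier is $\lambda(v_iv_{i+1})<s$. Your proposed repair --- a second forcing ``going the other way'' using $\lambda(v_{i-1}v_i)=s$ --- does not exist: the pair $(v_{i-1},v_i),(v_i,v_{i+1})$ has strictly \emph{decreasing} labels, and the Strict TTO condition only fires on strictly increasing paths; there is no symmetric rule to invoke, and in the example no further forcing is available anywhere. Note that the paper's own proof is silent at the same spot: it takes a minimal non-monolabel cycle, produces the chord, and declares the resulting shorter cycle ``a contradiction'' without checking that this shorter cycle is non-monolabel --- in the example above it is monolabel, so no contradiction arises. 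So the gap you identified is not a technicality of your induction; as stated, the cycle claim (and hence the observation in the form ``all arcs of an SCC share one label'') admits the counterexample above and would need a weaker formulation or additional hypotheses.

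A second, independent gap is in your Step~2: two arcs of a strongly connected digraph need not lie on a common \emph{simple} directed cycle (take two directed triangles $u\to v\to w\to u$ and $u\to x\to y\to u$ sharing only the vertex $u$: no simple cycle contains both $(v,w)$ and $(x,y)$), and shortcutting repeated vertices of your closed walk can delete one of the two distinguished arcs. The paper glues differently: every arc of an SCC lies on some directed cycle inside that SCC, and if two cycles of the component carried different labels one could find two such cycles sharing a vertex $v$; the in-arc of label $t_1$ and out-arc of label $t_2>t_1$ at $v$ form a strictly increasing path, whose forced chord creates a single cycle with two labels, reducing to the cycle claim. Your gluing should be replaced by an argument of that shape --- but only once the cycle claim itself has been settled, which, per the first paragraph, it currently is not.
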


\begin{proof}
All directed cycles in $O$ are necessarily monolabel. Suppose this is not true and let $C$ be the smallest non-monolabel directed cycle. Note that $|C| > 3$ as otherwise $O$ would not be Strict TTO. By hypothesis, there must exist two consecutive arcs in $C$ forming a strictly time-respecting directed path. But this implies the existence of a chord between these two arcs creating a strictly smaller directed cycle, a contradiction. Let $S$ be a strongly connected component of $O$. Then, each arc induced by $S$ will belong to a monolabel directed cycle of vertices all contained in $S$. Consider two directed cycles $C_{1}$ and $C_{2}$ both belonging to $S$. Note that the existence of $C_{1}$ and $C_{2}$ in $S$ of different time-labels implies the existence of two directed cycles $C_{1}'$ and $C_{2}'$ in $S$ of different time-labels $t_1$ and $t_2$ respectively and sharing a common vertex $v$. Let $t_1 < t_2$. Then, there necessarily exists an arc $(v_1,v)$ belonging to $C_1'$ and an arc $(v,v_2)$ belonging to $C_2'$, of time-labels $t_1$ and $t_2$ respectively. As these two arcs form a strictly time-respecting directed path, necessarily there exists a third arc $(v_1,v_2)$ in $O$. But then, $S$ induces a directed cycle $C_1 \cup C_2 \cup \{(v_1,v_2)\} \setminus \{(v_1,v),(v,v_2)\}$ with two distinct time-labels which implies a contradiction to directed cycles being monolabel in all Strict TTO.
\end{proof}

The following straightforward observations, related to the incoming and outcoming arcs of strongly connected components in $O$, will be useful when proposing a characterization via temporal ordered patterns, in Theorem \ref{theo:strict-tto-pattern-characterization}.

\begin{obs}
Let $S$ be a strongly connected component of $O$ with time-label $t_{1}$ and let $(v,w)$ be an outcoming arc from $S$ such that $v \in S, w \notin S$, $\lambda(vw) = t_2$ and $t_{1} < t_{2}$. Then, for every other vertex $u \in S$, $(u,w) \in O$ with $\lambda(uw) = t_{2}$.
\label{obs:connected-component-outcoming-arcs}
\end{obs}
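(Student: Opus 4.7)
The plan is to exploit strong connectivity of $S$ together with the monolabel property supplied by Observation~\ref{obs:strongly-connected-comp-labels-strict}: since every arc induced by $S$ has time-label $t_1$, for any $u \in S$ with $u \neq v$ there exists a directed path $u = u_0 \to u_1 \to \dots \to u_k = v$ entirely inside $S$, all of whose arcs carry label $t_1$. The argument will propagate the Strict TTO constraint along this path (appended with the outgoing arc $(v,w)$) to produce $(u,w)$, and then do a symmetric propagation along a path from $v$ to $u$ in $S$ to force the exact label $t_2$.

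First, I would show that $(u,w) \in O$ with $\lambda(uw) \geq t_2$. The two-arc path $(u_{k-1},v),(v,w)$ is strictly time-respecting because $t_1 < t_2$, so the Strict TTO condition yields $(u_{k-1},w) \in O$ with $\lambda(u_{k-1}w) \geq t_2$. Iterating, at each step the pair $(u_{i-1},u_i),(u_i,w)$ is still strictly time-respecting because the already-propagated label satisfies $\lambda(u_iw) \geq t_2 > t_1$; Strict TTO then gives $(u_{i-1},w) \in O$ with $\lambda(u_{i-1}w) \geq \lambda(u_iw) \geq t_2$. Walking all the way back to $u_0 = u$ produces the desired arc with a label at least $t_2$.

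Next I would upgrade the inequality to equality by contradiction: suppose $\lambda(uw) > t_2$. By strong connectivity, there is also a directed path $v = z_0 \to z_1 \to \dots \to z_p = u$ inside $S$, again with every arc labelled $t_1$. Exactly the same inductive propagation, but now starting from the strictly time-respecting path $(z_{p-1},u),(u,w)$ (admissible since $t_1 < \lambda(uw)$) and walking back to $z_0 = v$, produces at every step an arc $(z_i,w) \in O$ with $\lambda(z_iw) \geq \lambda(z_{i+1}w) > t_2$. Specialising to $i = 0$ gives $\lambda(vw) > t_2$, contradicting the hypothesis $\lambda(vw) = t_2$. Hence $\lambda(uw) = t_2$ as claimed. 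The only subtlety lies in checking that the monotone lower bound on the propagated label stays strictly above $t_1$ throughout the induction, which is immediate since $t_1 < t_2$; no further obstacle is expected.
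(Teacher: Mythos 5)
Your proof is correct: the paper states this as a ``straightforward observation'' without supplying an argument, and your two-pass propagation --- walking a path in $S$ back from $v$ to $u$ to force $(u,w)\in O$ with $\lambda(uw)\geq t_2$, then walking a path from $v$ to $u$ in the other direction to rule out $\lambda(uw)>t_2$ --- is exactly the intended use of the Strict TTO condition together with Observation~\ref{obs:strongly-connected-comp-labels-strict}. No gaps.
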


\begin{obs}
Let $S$ be a strongly connected component of $O$ with time-label $t_{2}$ and let $(v,w)$ be an incoming arc to $S$ such that $v \notin S, w \in S$, $\lambda(vw) = t_{1}$ and $t_{1} < t_{2}$. Then, for every other vertex $w' \in S$ such that $(w,w') \in O$, $(v,w') \in O$ with $\lambda(vw') \geq t_{2}$.
\label{obs:connected-component-incoming-arcs}
\end{obs}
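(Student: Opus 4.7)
The plan is to deduce the observation as an almost immediate consequence of the Strict TTO defining property, once one uses the previous Observation \ref{obs:strongly-connected-comp-labels-strict} to pin down the time-label of the arc $(w,w')$.

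First, I would invoke Observation \ref{obs:strongly-connected-comp-labels-strict}, which states that every strongly connected component of a Strict TTO is monolabel. Since $S$ has time-label $t_2$ and $(w,w')$ is an arc of $O$ with both endpoints in $S$, we get $\lambda(ww') = t_2$. Together with the hypothesis $\lambda(vw) = t_1 < t_2$, this means the pair of arcs $(v,w)$ and $(w,w')$ forms a directed path $vww'$ in $O$ which is strictly time-respecting, i.e. $\lambda(vw) < \lambda(ww')$.

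Next, I would apply the defining condition of a Strict TTO to this strictly time-respecting directed path. Recall that $O$ is a Strict TTO if, whenever $(a,b) \in O$ and $(b,c) \in O$ with $\lambda(ab) < \lambda(bc)$, then $(a,c) \in O$ with $\lambda(ac) \geq \lambda(bc)$. Applied to $a=v,\,b=w,\,c=w'$, this immediately yields $(v,w') \in O$ with $\lambda(vw') \geq \lambda(ww') = t_2$, which is exactly the conclusion of the observation.

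There is essentially no obstacle: the statement is a one-step unfolding of the Strict TTO axiom, with the only substantive ingredient being the previously established fact that $(w,w')$ inherits the uniform time-label $t_2$ of the strongly connected component $S$. The symmetric Observation \ref{obs:connected-component-outcoming-arcs} admits an identical one-line argument in the reverse direction.
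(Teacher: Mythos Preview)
Your argument is correct and is exactly the intended one: the paper states this observation without proof, calling it a ``straightforward observation'' after Observation~\ref{obs:strongly-connected-comp-labels-strict}, and your one-step application of the Strict TTO axiom to the strictly time-respecting path $vww'$ (using that $\lambda(ww')=t_2$ by monolabelness of $S$) is precisely the unfolding the authors have in mind.
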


Proof of Theorem \ref{theo:strict-tto-pattern-characterization}: \textit{The temporal graph $\mathcal{G}$ admits a strict temporal transitive orientation if and only if it belongs to $\mathcal{C}_{Strict-TTO}$.}

\begin{proof}

$\rightarrow$)
Let $O$ be a Strict TTO of $\mathcal{G}$ and let $D'$ be the directed acyclic graph obtained by contracting its strongly connected components. Let $\sigma'$ be the pre-order on the vertices of $\mathcal{G}$ defined by $D'$ and take $\sigma$ as a linear extension of $\sigma'$ such that all strongly connected components in $O$ are consecutive in $\sigma$. Observe that all arcs $(u,v)$ between different strongly connected components will be oriented from left to right by $\sigma$, i.e. $u \prec_{\sigma} v$. Let $a$,$b$ and $c$ be a triplet of vertices in $\mathcal{G}$ ordered by $\sigma$. Note that if $\lambda(ab) < \lambda(bc)$ but $(a,c) \notin E$ or $\lambda(ac) < \lambda(bc)$, then necessarily there exists a strongly connected component $S$ in $O$ such that either $a,b \in S$ with $c \notin S$ or $b,c \in S$ with $a \notin S$, by Observation \ref{obs:connected-component-outcoming-arcs} we claim that only the second case is possible. Given five ordered vertices $a$, $b$, $c$, $d$ and $e$, we will analyse the conditions for each pattern to arise and conclude that all patterns are avoided by $\sigma$.

\begin{itemize}
\item \underline{Patterns 1 and 2:}
Suppose pattern 1 is produced by the set of ordered vertices $a$, $b$, $c$, $d$. We deduce vertices $b$ and $d$ must belong to the same strongly connected component $S$ in $O$. As vertices belonging to a given strongly connected component should be consecutive in $\sigma$, we state that $c$ also belongs to $S$. However, we reach a contradiction by Observation \ref{obs:strongly-connected-comp-labels-strict} since $t_2 \neq t_3$. For pattern 2, the argument is analogous.

\item \underline{Pattern 3:}
Assume vertices $a$, $b$, $c$, $d$ produce pattern 3. Again, vertices $b$ and $c$ must belong to the same strongly connected component $S$ in $O$. Additionally, since $t_2 \neq t_3$, we state that vertex $d \notin S$. Then, arc $(b,d)$ is an outcoming arc from $S$ such that $t_2 < t_3$. But since $(c,d) \notin E$ or $\lambda(cd) < t_3$ this contradicts Observation \ref{obs:connected-component-outcoming-arcs}.

\item \underline{Pattern 4:}
Suppose vertices $a$, $b$, $c$, $d$ form pattern 4. Then, there must exist a strongly connected component $S$ in $O$ such that $b,c \in S$ and since $t_2 \neq t_3$, by Observation \ref{obs:strongly-connected-comp-labels-strict}, $a \notin S$. Then, $(a,c) \in O$ but this contradicts Observation \ref{obs:connected-component-incoming-arcs}, as $(a,b)$ should belong to $O$ with $\lambda(ab) \geq t_3$. 

\item \underline{Pattern 5:}
Let the ordered vertices $a$, $b$, $c$, $d$, $e$ form pattern 5. We argue the existence of two strongly connected components $S_1$ and $S_2$ in $O$ such that $b,c \in S_1$ and $c,d \in S_2$. Since $c$ belongs both to $S_1$ and $S_2$, necessarily $S_1 = S_2$. But then, by Observation \ref{obs:strongly-connected-comp-labels-strict}, $t_3$, $t_4$ and $t_5$ should be equal, which contradicts the hypothesis.

\end{itemize}

$\leftarrow$) 

Let $\sigma$ be an ordering of the vertices in $\mathcal{G}$ avoiding the patterns, we construct an orientation $O$ as follows. Given a triplet $a , b, c$ ordered by $\sigma$ with $ab,bc \in E$, if $\lambda(ab) = t_1$, $\lambda(bc) = t_2$ with $t_1 < t_2$, and either $ac\notin E$ or $\lambda(ac) < t_2$, then we add arc $(c,b)$ to $O$. We orient every other edge from left to right according to $\sigma$. Let's verify $O$ is a Strict TTO of $\mathcal{G}$. That is, taking $t_1$, $t_2$ and $t_3$ such that $t_1 < t_2$ and $t_3 \geq t_2$, none of the following cases occur, for any triplet $a$, $b$, $c$ ordered by $\sigma$ and oriented by $O$.

\begin{itemize}
\item \underline{Case 1:} $(a,b), (b,c) \in O$ with either $ac \notin E$, $\lambda(ac) < t_2$ or $(c,a) \in O$

\begin{figure}[H]
    \begin{center}
        \begin{tikzpicture}[scale=0.75]\label{2patterns}
        \node (a) at (0, 0) [circle, draw] {$a$};
        \node (b) at (2, 0) [circle, draw] {$b$};
        \node (c) at (4, 0) [circle, draw] {$c$};
        \draw[mid arrow] (a) -- (b) node[midway, below] {$t_1$};
        \draw[mid arrow] (b) -- (c) node[midway, below] {$t_2$};
        \draw[-,  line width=0.75pt, bend right=45, dashed] (c) to node[midway, above] {$< t_2$} (a);
        \end{tikzpicture} \hspace{5ex}
        \begin{tikzpicture}[scale=0.75]
        \node (a) at (0, 0) [circle, draw] {$a$};
        \node (b) at (2, 0) [circle, draw] {$b$};
        \node (c) at (4, 0) [circle, draw] {$c$};
        \draw[mid arrow] (a) -- (b) node[midway, below] {$t_1$};
        \draw[mid arrow] (b) -- (c) node[midway, below] {$t_2$};
        \draw[mid arrow, bend right=45] (c) to node[midway, above] {$t_3$} (a);
        \end{tikzpicture}
    \end{center}
\end{figure}
    
If both $(a,b), (b,c) \in O$, then $ac \in E$ such that $\lambda(ac) \geq t_2$. Otherwise, the orientation construction rules would have fixed $(c,b) \in O$. Suppose now $(c,a) \in O$ with time label set $t_3$, this implies the existence of vertex $a'$ such that $a'a \in E$ with $\lambda(a'a) < t_3$ , and either $a'c \notin E$ or $\lambda(a'c) < t_3$. But then vertices $a'$, $a$, $b$, $c$ would form pattern 1, which is a contradiction.
    
\vspace{1ex}
    
\item \underline{Case 2:} $(b,a),(c,b) \in O$ with either $ac \notin E$, $\lambda(ac) < t_2$ or $(a,c) \in O$
    
\begin{figure}[H]    
    \begin{center}
        \begin{tikzpicture}[scale=0.75]
        \node (a) at (0, 0) [circle, draw] {$a$};
        \node (b) at (2, 0) [circle, draw] {$b$};
        \node (c) at (4, 0) [circle, draw] {$c$};
        \draw[mid arrow] (b) -- (a) node[midway, below] {$t_2$};
        \draw[mid arrow] (c) -- (b) node[midway, below] {$t_1$};
        \draw[-, line width=0.75pt, bend right=45, dashed] (c) to node[midway, above] {$< t_2$} (a);
        \end{tikzpicture} \hspace{5ex}
        \begin{tikzpicture}[scale=0.75]
        \node (a) at (0, 0) [circle, draw] {$a$};
        \node (b) at (2, 0) [circle, draw] {$b$};
        \node (c) at (4, 0) [circle, draw] {$c$};
        \draw[mid arrow] (b) -- (a) node[midway, below] {$t_2$};
        \draw[mid arrow] (c) -- (b) node[midway, below] {$t_1$};
        \draw[mid arrow, bend left=45] (a) to node[midway, above] {$t_3$} (c);
        \end{tikzpicture}
    \end{center} 
\end{figure}
    
If $(b,a),(c,b) \in O$, we can state the existence of both vertex $a'$ such that $a'a \in E$ with $\lambda(a'a) < t_2$, and either $a'b \notin E$ or $\lambda(a'b) < t_2$, and vertex $b'$ such that $b'b \in E$ with $\lambda(b'b) < t_1$, and either $b'c \notin E$ or with $\lambda(b'c) < t_1$. If $ac \notin E$, $\lambda(ac) < t_2$ or $(a,c) \in O$, then vertices $a'$, $a$, $b$, $c$ would form pattern 5.
    
\vspace{1ex}    
    
\item \underline{Case 3:} $(b,a),(a,c) \in O$ with either $bc \notin E$, $\lambda(bc) < t_2$ or $(c,b) \in O$
\begin{figure}[H]
    \begin{center}
        \begin{tikzpicture}[scale=0.75]
        \node (a) at (0, 0) [circle, draw] {$a$};
        \node (b) at (2, 0) [circle, draw] {$b$};
        \node (c) at (4, 0) [circle, draw] {$c$};
        \draw[mid arrow] (b) -- (a) node[midway, below] {$t_1$};
        \draw[-, line width=0.75pt, dashed ] (b) -- (c) node[midway, below] {$< t_2$};
        \draw[mid arrow, bend left=45] (a) to node[midway, above] {$t_2$} (c);
        \end{tikzpicture} \hspace{5ex}
        \begin{tikzpicture}[scale=0.75]
        \node (a) at (0, 0) [circle, draw] {$a$};
        \node (b) at (2, 0) [circle, draw] {$b$};
        \node (c) at (4, 0) [circle, draw] {$c$};
        \draw[mid arrow] (b) -- (a) node[midway, below] {$t_1$};
        \draw[mid arrow] (c) -- (b) node[midway, below] {$t_3$};
        \draw[mid arrow, bend left=45] (a) to node[midway, above] {$t_2$} (c);
        \end{tikzpicture}
    \end{center}
\end{figure}

If $(b,a) \in O$, there must exist a vertex $a'$ such that $a'a \in E$, with $\lambda(a'a) < t_1$, and either $a'b \notin E$ or $\lambda(a'b) < t_1$, this corresponds to pattern 3. If $(c,b)\in O$, then vertices $a$, $b$, $b'$, $c$ would form pattern 5.

\vspace{1ex}

\item \underline{Case 4:} $(c,a),(a,b) \in O$ with either $bc \notin E$, $\lambda(bc) < t_2$ or $(b,c) \in O$
\begin{figure}[H]
    \begin{center}
        \begin{tikzpicture}[scale=0.75]
        \node (a) at (0, 0) [circle, draw] {$a$};
        \node (b) at (2, 0) [circle, draw] {$b$};
        \node (c) at (4, 0) [circle, draw] {$c$};
        \draw[mid arrow] (a) -- (b) node[midway, below] {$t_2$};
        \draw[-, line width=0.75pt, dashed ] (b) -- (c) node[midway, below] {$< t_2$};
        \draw[mid arrow, bend right=45] (c) to node[midway, above] {$t_1$} (a);
        \end{tikzpicture}\hspace{5ex}
        \begin{tikzpicture}[scale=0.75]
        \node (a) at (0, 0) [circle, draw] {$a$};
        \node (b) at (2, 0) [circle, draw] {$b$};
        \node (c) at (4, 0) [circle, draw] {$c$};
        \draw[mid arrow] (a) -- (b) node[midway, below] {$t_2$};
        \draw[mid arrow] (b) -- (c) node[midway, below] {$t_3$};
        \draw[mid arrow, bend right=45] (c) to node[midway, above] {$t_1$} (a);
        \end{tikzpicture}
    \end{center}
\end{figure}

As for the previous case, orientation $(c,a)$ implies the existence of a vertex $a'$ such that $a'a \in E$ such that either $a'c \notin E$ or $\lambda(a'c) < t_1$. As $t_1 \neq t_2$, then vertices $a'$, $a$, $b$, $c$ reproduce pattern 1. If $(b,c) \in O$, the vertices will still produce the first pattern. 

\vspace{1ex}

\item \underline{Case 5:} $(a,c),(c,b) \in O$ with either $ab \notin E$, $\lambda(ab) < t_2$ or $(b,a) \in O$
\begin{figure}[H]    
    \begin{center}
        \begin{tikzpicture}[scale=0.75]
        \node (a) at (0, 0) [circle, draw] {$a$};
        \node (b) at (2, 0) [circle, draw] {$b$};
        \node (c) at (4, 0) [circle, draw] {$c$};
        \draw[-, line width=0.75pt, dashed] (a) -- (b) node[midway, below] {$< t_2$};
        \draw[mid arrow] (c) -- (b) node[midway, below] {$t_2$};
        \draw[mid arrow, bend left=45] (a) to node[midway, above] {$t_1$} (c);
        \end{tikzpicture}\hspace{5ex}
        \begin{tikzpicture}[scale=0.75]
        \node (a) at (0, 0) [circle, draw] {$a$};
        \node (b) at (2, 0) [circle, draw] {$b$};
        \node (c) at (4, 0) [circle, draw] {$c$};
        \draw[mid arrow] (b) -- (a) node[midway, below] {$t_3$};
        \draw[mid arrow] (c) -- (b) node[midway, below] {$t_2$};
        \draw[mid arrow, bend left=45] (a) to node[midway, above] {$t_1$} (c);
        \end{tikzpicture}
    \end{center}
\end{figure}

As $(c,b) \in O$, there must exist a vertex $b'$ such that $b'b \in E$, with $\lambda(b',b) < t_2$ and either $b'c \notin E$ or $\lambda(b',c) < t_2$, this corresponds to pattern 4. Now, suppose $(b,a) \in O$, this corresponds to pattern 5.

\vspace{1ex}

\item \underline{Case 6:} $(b,c),(c,a) \in O$ with either $ab \notin E$, $\lambda(ab) < t_2$ or $(a,b) \in O$ 
\begin{figure}[H]
    \begin{center}
        \begin{tikzpicture}[scale=0.75]
        \node (a) at (0, 0) [circle, draw] {$a$};
        \node (b) at (2, 0) [circle, draw] {$b$};
        \node (c) at (4, 0) [circle, draw] {$c$};
        \draw[-, line width=0.75pt, dashed] (a) -- (b) node[midway, below] {$< t_2$};
        \draw[mid arrow] (b) -- (c) node[midway, below] {$t_1$};
        \draw[mid arrow, bend right=45] (c) to node[midway, above] {$t_2$} (a);
        \end{tikzpicture}\hspace{5ex}
        \begin{tikzpicture}[scale=0.75]
        \node (a) at (0, 0) [circle, draw] {$a$};
        \node (b) at (2, 0) [circle, draw] {$b$};
        \node (c) at (4, 0) [circle, draw] {$c$};
        \draw[mid arrow] (a) -- (b) node[midway, below] {$t_3$};
        \draw[mid arrow] (b) -- (c) node[midway, below] {$t_1$};
        \draw[mid arrow, bend right=45] (c) to node[midway, above] {$t_2$} (a);
        \end{tikzpicture}
    \end{center}
\end{figure}
        
We deduce the existence of a vertex $a'$ such that $a'a \in E$, with $\lambda(a',a) < t_2$ and either $a'c \notin E$ or $\lambda(a',c) < t_2$,  this corresponds to pattern 2. If $(a,b) \in O$, the vertices will produce pattern 1.

\end{itemize}

For each triplet $a$, $b$, $c \in V$ ordered by $\sigma$, we have analysed all six scenarios were $O$ could fail to satisfy the Strict TTO constraints and proved the impossibility of them to occur. Therefore, we can state that if $(a,b) \in O$ and $(b,c) \in O$ with $\lambda(ab) < \lambda(bc)$ then by construction of $O$ we ensure that $(a,c)$ will also belong to the orientation with a time-label such that $\lambda(ac) \geq \lambda(bc)$. We conclude that $O$ is a Strict TTO of $\mathcal{G}$.
\end{proof}

\subsection{Multilabel temporal transitive orientations}
\label{appendix:mtto}

We start by observing that orienting an edge of a multilabel temporal graph will fix the orientation for all its temporal time-labels. Let's now recall the definition of multilabel temporal transitive orientations.

\begin{definition}
Let $\mathcal{G}=(G,\lambda)$ be a multilabel temporal graph and $O$ an orientation of $\mathcal{G}$. We will say $O$ is a multilabel temporal transitive orientation (MTTO) if whenever $(a,b),(b,c) \in O$ with $\lambda(ab) \preceq \lambda(bc)$, then necessarily $(a,c) \in O$ with $\lambda(ac) \hspace{0.5ex} \cancel{\prec} \hspace{0.5ex} \lambda(bc)$.
\end{definition}

An alternative way to put it is that whenever $(a,b),(b,c) \in O$ with $min\{\lambda(ab)\} \leq max\{\lambda(bc)\}$, then $(a,c) \in O$ with $min\{\lambda(ac)\} \geq max\{\lambda(bc)\}$, taking as $min$ and $max$ the minimum and maximum elements of a given set. As for simple temporal graphs, one can define the Strict MTTO version by asking for the third arc $(a,c)$ to belong to $O$ only when $\lambda(ab) \prec \lambda(bc)$. One can verify that these definitions correspond to an extension of the original definitions for temporal transitivity by considering simple temporal graphs. Also, as for the static case, MTTO are acyclic and for the strict version, only monolabel strongly connected components having a singleton time-label set can be non-trivial. Next, we present some arguments to state that the characterization and algorithmic result presented for simple temporal graphs hold for the general case of multilabel temporal graphs as well. Complete proofs are omitted as they are analogous to the simple case.

An analogous definition for \emph{quasi-multilabel temporal transitive orientations} can be proposed as follows. 

\begin{definition}
Let $\mathcal{G}=(G,\lambda)$ be a multilabel temporal graph and $O$ an orientation of $\mathcal{G}$. We will say $O$ is a quasi-multilabel temporal transitive orientation (QMTTO) if whenever $(a,b),(b,c) \in O$ with $\lambda(ab) \preceq \lambda(bc)$, then necessarily $(a,c) \in O$ or $(c,a) \in O$ with $\lambda(ac) \hspace{0.5ex} \cancel{\prec} \hspace{0.5ex} \lambda(bc)$.
\end{definition}

In the multilabel scenario, the equivalent to monolabel triangles considered in Section \ref{sect:tto-algorithm} will be monolabel triangles with singleton time-label sets, as these will be the only directed triangles admitted by a QMTTO. Then, we can define the extension to correlated monolabel triangles as a quadruple of vertices $(a,b,c,d)$ of $\mathcal{G}$ such that:

\begin{itemize}
    \item triangle $abc$ is monolabel with singleton time-label set $T$
    \item $bd \in E$ with $\lambda(bd)\preceq T$ 
    \item $cd \in E$ with $\lambda(cd) \hspace{1ex} \cancel{\preceq} \hspace{1ex} T$
    \item if $(a,d) \in E$, then $\lambda(ad) \prec T$
\end{itemize}

\begin{figure}[H]
\begin{center}
\begin{tikzpicture}[scale=0.75]

\node[draw, circle, inner sep=1pt] (D) at (0,0) {$d$};
\node[draw, circle, inner sep=1pt] (B) at (2,0) {$b$};
\node[draw, circle, inner sep=1pt] (A) at (1,-1.5) {$a$};
\node[draw, circle, inner sep=1pt] (C) at (3,-1.5) {$c$};

\draw[-,line width=0.75pt] (D) -- (B) node[midway, above] {$\preceq T$};
\draw[-, dashed] (D) -- (A) node[midway, left] {$ \prec T$};
\draw[-, line width=0.75pt] (C) -- (A) node[midway, below] {$T$};
\draw[-, line width=0.75pt] (C) -- (B) node[midway, right] {$T$};
\draw[-, line width=0.75pt] (A) -- (B) node[midway, left] {$T$};
\draw[line width=0.75pt] (D) .. controls (-1.8,-2) and (2.1,-3.7) .. (C) node[midway, below left] {$ \hspace{1ex} \cancel{\preceq} \hspace{1ex} T$};

\end{tikzpicture}
\end{center}
\caption{Correlated monolabel triangle $(a,b,c,d)$}
\label{fig:bad-quadruple-mtto}
\end{figure}
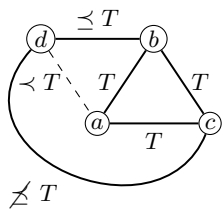

One can prove the same forcings as in the static case will be produced for a given correlated monolabel triangle in the multilabel setting. Therefore, we can consider an analogous definition of \emph{almost-multilabel temporal transitive orientations} (AMTTO) leading to an equivalent characterization and lexicographic strategy. Note that only the minimum and maximum values of the time-label sets are considered when formulating the constraints. Then, if we considered multilabel temporal graphs with ordered time-label sets, the size of the label sets will not impact the algorithm's complexity. These arguments lead to the formulation of Theorem \ref{theo:mtto-gouila-houri-generalization}.

From \cite{casteigts2024simple}, a multilabel temporal graph $\mathcal{G}$ is \textit{proper} if $\lambda(e) \hspace{0.25ex} \cap \hspace{0.25ex} \lambda(e') = \emptyset$ whenever $e$ and $e'$ are incident to a common vertex. Then, we can state the following, which is particularly interesting for the Strict MTTO recognition problem, proved NP-hard for simple temporal graphs.

\begin{theorem}
The MTTO and Strict MTTO recognition problem for proper multilabel temporal graphs can be solved in $O(nm)$.
\label{theo:recognition-proper-temporal-graphs}
\end{theorem}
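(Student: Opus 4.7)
The plan is to leverage properness to eliminate the only combinatorially difficult feature of the problem (monolabel triangles with singleton labels) and then invoke Theorem \ref{theo:mtto-gouila-houri-generalization} directly. For the Strict MTTO case, I would additionally show that properness collapses $\preceq$ and $\prec$ on adjacent edges, so the two problems coincide.

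First I would prove the key structural observation: a proper multilabel temporal graph $\mathcal{G}=(G,\lambda)$ contains no monolabel triangle with singleton time-label set. Indeed, if $abc$ were such a triangle with common label $\{t\}$, then $ab$ and $bc$ would share vertex $b$ yet satisfy $t \in \lambda(ab) \cap \lambda(bc)$, contradicting properness. This is the multilabel analogue of the situation in the simple case. Consequently the parameter $k$ from Theorem \ref{theo:mtto-gouila-houri-generalization} is $0$, no correlated monolabel triangle exists, and $\mathrm{Aug}(\mathcal{G}) = \mathrm{Imp}(\mathcal{G})$. Then the construction of $\mathrm{Aug}(\mathcal{G})$ and the extraction of $\mathrm{Ness}(\mathcal{G})$ reduce to computing $\mathrm{Imp}(\mathcal{G})$ and applying Tarjan's strongly connected components algorithm, both of which fit in $O(nm)$. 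Plugging into Theorem \ref{theo:mtto-gouila-houri-generalization} gives an $O(nm)$ recognition algorithm for MTTO, together with the construction of an MTTO in the YES case.

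Next I would handle the Strict MTTO case by showing that on a proper multilabel temporal graph the strict and non-strict versions coincide. Let $e,e'$ be edges incident to a common vertex. By properness $\lambda(e) \cap \lambda(e') = \emptyset$, so whenever $x \in \lambda(e)$ and $y \in \lambda(e')$ satisfy $x \leq y$, we actually have $x < y$. Thus $\lambda(e) \preceq \lambda(e')$ is equivalent to $\lambda(e) \prec \lambda(e')$ for adjacent edges. Since the MTTO constraint is always evaluated on a pair of adjacent edges $(ab),(bc)$, the defining conditions of MTTO and Strict MTTO coincide on $\mathcal{G}$. Consequently the same algorithm solves both problems, still in $O(nm)$.

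The main obstacle, essentially, is conceptual rather than technical: one has to identify that properness is precisely the condition that kills off the singleton-label monolabel triangles (the only case where the generic bound of Theorem \ref{theo:mtto-gouila-houri-generalization} might deteriorate to $O(m^2)$) and, simultaneously, the only source of the gap between MTTO and Strict MTTO. Once this is observed, the running time follows directly from the generic bound with $k=0$ and the equivalence $\preceq\,\Leftrightarrow\,\prec$ on adjacent edges, with no further work needed. As a corollary, in sharp contrast to the simple temporal setting where Strict TTO is NP-hard, on proper multilabel temporal graphs Strict MTTO becomes polynomial.
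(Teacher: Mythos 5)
Your proof is correct and follows essentially the same route as the paper: properness forbids any triangle whose adjacent edges share a label, so no singleton-label monolabel triangle (hence no correlated one) exists, $\mathrm{Aug}(\mathcal{G})=\mathrm{Imp}(\mathcal{G})$, and the problem reduces to the $O(nm)$ 2-SAT/implication-digraph computation. Your explicit argument that $\preceq$ and $\prec$ coincide on adjacent edges of a proper graph is a welcome elaboration of the paper's brief assertion that the Strict MTTO constraints are likewise captured by $\mathrm{Imp}(\mathcal{G})$.
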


\begin{proof}
No proper graph can contain a triangle such that two of its arcs share the same time-label set. Then, the constraints for both MTTO and Strict MTTO will be entirely captured by $Imp(\mathcal{G})$. Deciding whether $Imp(\mathcal{G})$ is satisfiable can be solved in $O(nm)$ which is the cost of computing the implication graph, as well as its size. 
\end{proof}

\end{document}